\newcolumntype{L}[1]{>{\raggedright\let\newline\\\arraybackslash\hspace{0pt}}m{#1}}
\newcolumntype{C}[1]{>{\centering\let\newline\\\arraybackslash\hspace{0pt}}m{#1}}
\newcolumntype{R}[1]{>{\raggedleft\let\newline\\\arraybackslash\hspace{0pt}}m{#1}}
\newcommand{\norm}[1]{\Vert#1\Vert}
\newcommand{\Norm}[1]{\left\Vert#1\right\Vert}
\newcommand{\abs}[1]{\vert#1\vert}
\newcommand{\Abs}[1]{\left\vert#1\right\vert}
\newcommand{\set}[1]{\left\{#1\right\}}
\newcommand{\R}{\mathds{R}}
\newcommand{\N}{\mathds{N}}
\newcommand{\One}{\mathds{1}}
\newcommand{\st}{\mathrm{s.t.}}
\newcommand{\define}{\coloneqq}
\DeclareMathOperator{\diag}{Diag}
\DeclareMathOperator{\sign}{sign}
\DeclareMathOperator{\Sign}{Sign}
\DeclareMathOperator*{\argmin}{argmin}
\begin{document}

\title{A Primal-Dual Homotopy Algorithm for $\ell_{1}$-Minimization with $\ell_{\infty}$-Constraints\thanks{
    This material was based upon work partially supported by the National Science Foundation under Grant DMS-1127914 to the Statistical and Applied Mathematical Sciences Institute. 
    Any opinions, findings, and conclusions or recommendations expressed in this material are those of the author(s) and do not necessarily reflect the views of the National Science Foundation.}
}


\author{Christoph Brauer\and
  Dirk A. Lorenz\and
  Andreas M. Tillmann
}


\institute{Christoph Brauer \at
              Technische Universität Braunschweig, Institut für Analysis und Algebra\\
              Pockelsstr. 14, 38106 Braunschweig, Germany\\
              Tel.: +49-531-3917421\\
              Fax: +49-531-3917414\\
              \email{ch.brauer@tu-braunschweig.de}\\
              \and
              Dirk A. Lorenz \at
              Technische Universität Braunschweig, Institut für Analysis und Algebra\\
              Pockelsstr. 14, 38106 Braunschweig, Germany\\
              \email{d.lorenz@tu-braunschweig.de}\\
              \and
              Andreas M. Tillmann \at
              Technische Universität Darmstadt, AG Optimierung\\
              Dolivostr. 15, 64293 Darmstadt, Germany\\
              \email{tillmann@mathematik.tu-darmstadt.de}
}

\date{Received: date / Accepted: date}

\maketitle

\begin{abstract}
  In this paper we propose a primal-dual homotopy method for
  $\ell_1$-minimization problems with infinity norm constraints in the
  context of sparse reconstruction. The natural homotopy parameter is
  the value of the bound for the constraints and we show that there
  exists a piecewise linear solution path with finitely many break
  points for the primal problem and a respective piecewise constant
  path for the dual problem. We show that by solving a small linear
  program, one can jump to the next primal break point and then,
  solving another small linear program, a new optimal dual solution is
  calculated which enables the next such jump in the subsequent
  iteration. Using a theorem of the alternative, we show that the
  method never gets stuck and indeed calculates the whole path in a
  finite number of steps.
  
  Numerical experiments demonstrate the effectiveness of our algorithm. 
  In many cases, our method significantly outperforms commercial LP
  solvers; this is possible since our approach employs a sequence of
  considerably simpler auxiliary linear programs that can be solved
  efficiently with specialized active-set strategies. 

  \keywords{Convex Optimization\and Dantzig Selector\and Homotopy Methods\and Nonsmooth Optimization\and Primal-Dual Methods}
 \subclass{90C05\and 90C255\and 65K05}
\end{abstract}

\section{Introduction}\label{sec:introduction}

With the advent of Compressed Sensing
\cite{CandesTao2005,Donoho2006,EldarKutyniok2012,FoucartRauhut2013},
recovery of sparse vectors by means of the popular Basis Pursuit
approach~\cite{ChenDonohoSaunders1998},
\begin{equation}
  \label{eq:bp}
  \tag{BP}
  \min_{x\in\R^n} \norm{x}_1\quad\st\quad Ax=b,
\end{equation}
and the so-called Basis Pursuit Denoising (or $\ell_1$-regularized
Least-Squares) problem
\begin{equation}
  \label{eq:l1ls}
  \tag{$\ell_1$-LS}
  \min_{x\in\R^n} \lambda\norm{x}_1 + \tfrac12\norm{Ax-b}_2^2,
\end{equation}
with $A\in\R^{m\times n}$, $b\in\R^m$ and $\lambda>0$, received a lot
of attention both theoretically and algorithmically over the past
decade (see, e.g., \cite{LorenzPfetschTillmann2015,FoucartRauhut2013}
and many references therein). However, the related problem
\begin{equation}
  \label{eq:p_delta}
  \tag{P$_{\delta}$}
  \min_{x\in\R^n} \norm{x}_1\quad\st\quad \norm{Ax-b}_\infty\leq\delta
\end{equation}
appears to be much less investigated. This problem can be rewritten as
a linear program (LP) by formulating the $\ell_\infty$-norm constraint
as linear inequalities and performing the usual variable split of $x$
into its positive and negative parts (see~\eqref{eq:p_deltaLP}
below). Thus, in principle, every LP solver can be applied to solve the
problem. However, in practice it may happen that the problem instances
are very large (and with $A$ dense or perhaps only available
implicitly) so that current LP solvers may not be able to handle the
problem well. Moreover, there are cases in which one does not only
want to solve the problem for a given instance of $(A,b,\delta)$ but
for a whole range of parameters~$\delta$.

Our interest in sparse approximation under $\ell_\infty$-constraints
via the problem~\eqref{eq:p_delta} is motivated by several practical
applications:
\begin{itemize}
\item The \emph{Dantzig selector} problem~\cite{CandesTao2007}
  \begin{equation}
    \label{eq:ds}
    \tag{DS$_{\delta}$}
    \min_{x\in\R^n} \norm{x}_1\quad\st\quad \Norm{A^\top(Ax-b)}_\infty\leq\delta
  \end{equation}
  is a special case of~\eqref{eq:p_delta} and has numerous applications
  in statistical estimation, see, e.g.,~\cite{ZhengLiu2011}, where the
  whole solution path for~$\delta>0$ is computed as a selection step
  prior to a classification step.
\item In \emph{sparse dequantization}, one has quantized measurements
  $b=Q(A\bar x)$ of some signal vector $\bar x$ which is assumed to be
  sparse. If the quantization level is known, one can
  interpret~\eqref{eq:p_delta} as the problem of finding a
  reconstruction~$x^*$ with minimal $\ell_1$-norm for which the
  measurements~$Ax^*$ produce the same quantized measurements~$b$. We
  refer to~\cite{JaquesHammondFadili2011} for the general idea and
  to~\cite{BrauerGerkmannLorenz2016} for a recent application to
  speech processing.
\item In \emph{sparse linear discriminant analysis} as proposed
  in~\cite{CaiLiu2011}, one obtains a problem of the
  form~\eqref{eq:p_delta} in which $A$ is a sample covariance matrix
  and $b$ is a difference of sampled means. Similarly, the so-called
  CLIME estimator~\cite{CaiLiuLuo2011} solves \emph{sparse precision
    matrix estimation} problems via a sequence of \eqref{eq:p_delta}
  problems in each of which $A$ is again a covariance matrix and $b$
  is equal to a unit vector.
\end{itemize}

In this paper, we develop a homotopy algorithm for the
problem~\eqref{eq:p_delta}. The starting point is that for $\delta\geq
\norm{b}_\infty$, the vector $x=0$ is obviously the optimal
solution. Moreover, we will show that for a solution~$x$
of~\eqref{eq:p_delta} for a given~$\delta>0$, there exist a
direction~$d$ and a scalar $t_0>0$ such that $x+td$ is a solution of
(P$_{\delta - t}$) for $0\leq t\leq t_0$. Our algorithm builds on
these observations and calculates a path of solutions for decreasing
values of~$\delta$ until a target $\delta$-value is reached; we shall
prove that the algorithm is able to compute such a path in finitely many steps
(even if the final value is $\delta=0$). Our approach resembles the
popular homotopy method for \eqref{eq:l1ls},
cf.~\cite{OsbornePresnellTurlach2000}, but, as detailed later, our
method has to work on both the primal and dual problem simultaneously,
so that the algorithms differ considerably.

The remainder of this paper is structered as follows: We further touch
upon related methods in Subsection~\ref{subsec:relatedwork} below, and
fix some notation in Subsection~\ref{subsec:notation}. The main part
of the paper, Section~\ref{sec:homotopyalgorithm}, constitutes a
detailed derivation of our homotopy approach to~\eqref{eq:p_delta},
including theoretical results on iterative improvement and finite
termination. An efficient solution approach for subproblems
encountered in our scheme is put forth in
Section~\ref{sec:practicalconsiderations}. We consider some practical
applications and present computational results in
Section~\ref{sec:applicationsandexamples}, discuss possible
extensions and conclude the paper in Section~\ref{sec:extensions}.

\subsection{Related Work}\label{subsec:relatedwork}
Homotopy concepts have been around for decades, so it should come as
no surprise that our approach bears some resemblance to several
earlier algorithms. In the following, we briefly comment on
similarities and differences with respect to the arguably most
naturally related methods.

\subsubsection{Parametric Simplex Method}\label{subsec:PSM}
It is well-known that problem~\eqref{eq:p_delta} can be recast as an LP,
e.g., 
\begin{align}
  \nonumber \min_{x^\pm,s^\pm\in\R^n}\quad &\One^\top x^+ + \One^\top x^-\\
  \label{eq:p_deltaLP}  \text{s.t.}\quad & \left(\begin{array}{rrrr}A & -A & ~I & 0\\-A & A & 0 & ~I\end{array}\right)\left(\begin{array}{c}x^+\\x^-\\s^+\\s^-\end{array}\right)=\left(\begin{array}{r}b+\delta\One\\-b+\delta\One\end{array}\right)\\
  \nonumber  & x^+,~x^-,~s^+,~s^-\geq 0.
\end{align}
There exists a variety of homotopy schemes for LPs, see, for instance,
\cite{Blum1988,Nazareth1991} and references therein. In fact, the latter
work shows how many standard LP algorithms (simplex, affine-scaling
and interior-point methods) can be subsumed under a unifying homotopy
framework, exhibiting nice connections between intuitively very
different approaches. The LP homotopy method most naturally related to
our approach results from treating the parameter $\delta$ itself as
the homotopy parameter (as we shall also do in our method) in the
above LP---the so-called (self-dual) \emph{parametric simplex method}
(PSM) \cite{Dantzig1963,Vanderbei2001}. Very briefly, PSM perturbs both
the LP right-hand side and objective coefficient vectors using the
same parameter and then drives this parameter down to zero, performing
primal or dual simplex pivot steps at each breakpoint in the
(piecewise linear) parameter homotopy path. For a sufficiently large
initial parameter, a primal-dual feasible (hence, optimal) basis is
easily found and used to start the algorithm; reducing the parameter,
basis optimality is maintained until either a basic variable or
nonbasic reduced cost coefficient changes sign, which identifies the
breakpoints and induces an appropriate simplex step to exchange some
basis element for a nonbasic one. (For a detailed formal description,
see, e.g., \cite[pp.~115--121]{Vanderbei2001}.)

In fact, PSM was very recently proposed for sparse linear discriminant
analysis problems by means of reformulating the associated
problem~\eqref{eq:p_delta} as precisely the LP stated above,
see~\cite{PangZhaoVanderbeiLiu2015}, in which PSM is applied to several
other problems as well. For the above special parameterized LP, one
needs to stop PSM as soon as the parameter drops below the target
original $\delta$ (\emph{not} zero) and since the objective is
unperturbed, only primal simplex pivot steps are performed throughout
the entire algorithmic process (i.e., each breakpoint identifies some
variable that is to leave the basis in exchange for a nonbasic one;
neither of these facts is mentioned in~\cite{PangZhaoVanderbeiLiu2015}).

If the optimal solutions for each respective parameter interval are
unique, then PSM and our approach necessarily produce the same
solution path. However, the paths may differ if multiple optimal
solutions occur, as the underlying algorithmic concepts are different:
For one thing, we operate in the original variable space ($n$ primal
and $m$ dual variables versus $2n+2m$ variables in the above
parameterized LP), and thus avoid doubling the dimensions. Moreover,
in each iteration, PSM is restricted to moving to an adjacent basis
and, in particular, can get ``stuck'' at a certain parameter value for
several iterations (namely when several pivot steps are needed to
eventually arrive at a new basis that allows to further reduce the
parameter). Such a situation can never occur in our algorithm
(cf.~Theorem~\ref{theorem:alternatives} in
Section~\ref{subsec:theoremsofthealternative} below); indeed, our scheme
guarantees the largest reduction of $\delta$ in every iteration and
moves directly to associated optimal points.

Regarding implementation, PSM is subject to all advantages and
drawbacks that come with any simplex method, e.g., its basic version
(as described in~\cite{Vanderbei2001}) may cycle and hence not even
terminate, special care needs to be taken to compute and maintain
numerically stable basis matrix factorizations, etc. Our approach is
straightforward to implement, but requires access to an LP solver for
subproblem optimization---given the large selection of sophisticated
LP solvers (both proprietary and freely available) to choose from, we
actually consider this a feature, not a disadvantage. In particular,
this allows us to use a certain active-set LP strategy that turns out
to be particularly well-suited to the subproblems occurring during our
method, see Section~\ref{sec:practicalconsiderations}. At least in
case of multiple optimal solutions, both PSM and our homotopy method
are naturally influenced by choices made for crucial steps
(i.e., pivoting rules for PSM and LP subproblem solver choice in our
implementation), which makes a direct numerical comparison somewhat
meaningless; hence, we do not delve into this subject further. (It
should however be noted that the homotopy approach not only provides
the whole solution path, but for sparse solutions is also
significantly faster than applying a standard LP solver to the LP
reformulation of~\eqref{eq:p_delta} directly.)

Finally, let us remark that the relationship between
\eqref{eq:p_delta} and linear programming extends, in a sense, both
ways: Obviously, a general LP method can be used to
solve~\eqref{eq:p_delta}, rewritten as the above LP, but a relevant
and relatively large subclass of LPs can also be recast into a form
resembling~\eqref{eq:p_delta} for which our algorithm can be adapted
straightforwardly, cf. Section~\ref{sec:extensions}.

\subsubsection{Dantzig Selector and $\ell_1$-Regularized Least-Squares Homotopy}\label{subsec:DSl1LS}
A homotopy scheme for the Dantzig selector problem~\eqref{eq:ds} was
proposed in~\cite{AsifRomberg2009}. There, the general idea is also to
perform primal and dual update steps in each iteration, starting from
a large value for the parameter $\delta$ (for which the optimal
solution is trivially known) and driving it down toward the desired
level. The update steps consist of finding directions along which
optimality conditions are maintained and by choosing suitable step
sizes, breakpoints in the homotopy path are identified; the supports
of the current primal and dual variables are updated one element at a
time\footnote{The description in~\cite{AsifRomberg2009} is a bit
  unclear in this regard; it seems the authors implicitly use a kind
  of subproblem uniqueness assumption under which this works out well,
  although the choice of indices entering or leaving a support
  apparently needs not be uniquely determined in general. Also, they
  claim the optimality conditions they work with imply uniqueness, but
  they are equivalent to the standard LP optimality conditions with
  strict complementary slackness (see, e.g.,
  \cite[Section~7.9]{Schrijver1986}) applied to the LP obtainable
  from~\eqref{eq:ds}, which do not import a statement about
  uniqueness.}.

Clearly, \eqref{eq:ds} is a special case of the more general
problem~\eqref{eq:p_delta} we consider. Moreover, we allow primal and
dual supports to change by more than one component per iteration (and
we do not make any uniqueness assumptions), so our approach also
generalizes that of~\cite{AsifRomberg2009} conceptually. Another
difference is that we do not explicitly compute directions first but
directly obtain the respective next points. Nevertheless, the method
from~\cite{AsifRomberg2009} remains of interest in its own right,
since the special (Gramian) structure of the constraint matrix allows
for a more direct subproblem treatment than the LPs we will solve.

As discussed in \cite{JamesRadchenkoLv2009,AsifRomberg2010}, for
certain sparsity levels of the optimal solution to~\eqref{eq:ds}
and/or conditions on the matrix $A$, the whole respective solution
paths of the Dantzig selector homotopy from~\cite{AsifRomberg2009},
the related but different DASSO algorithm
from~\cite{JamesRadchenkoLv2009}, and the homotopy scheme
for~\eqref{eq:l1ls} (see~\cite{OsbornePresnellTurlach2000})
coincide. (Also, the Dantzig selector homotopy algorithm can be
modified quite simply to reduce to the $\ell_1$-LS homotopy scheme,
cf.~\cite{AsifRomberg2009}).

Thus, our algorithm is naturally related to those methods as well:
Though \eqref{eq:p_delta} generalizes \eqref{eq:ds}, which in turn is
sometimes equivalent to \eqref{eq:l1ls}, neither problems are
equivalent, whence the various algorithms are necessarily different,
though certainly very similar in spirit. It is also worth noting that
while the homotopy for~\eqref{eq:l1ls} is a primal
method\footnote{More precisely, due to the smooth $\ell_2$-part
  in~\eqref{eq:l1ls}, for every primal optimal solution w.r.t. some
  parameter $\delta$, the associated dual optimal solution is known in
  closed-form, which can be substituted into the algorithmic formluae
  directly, eliminating the need for keeping a dual variable
  explicitly.}, the approaches for~\eqref{eq:ds} and also our proposed
algorithm work in a primal-dual fashion.

\subsection{Notation}\label{subsec:notation}
For $A\in\R^{m\times n}$, $a_{i}^{\top}$ denotes the $i$-th row and
$A_{j}$ denotes the $j$-th column of $A$. Moreover, for
$I\subseteq\set{1,\dots,m}$ and $J\subseteq\set{1,\dots,n}$,
$A_{J}^{I}$ denotes the sub-matrix of $A$ with rows indicated by $I$
and columns indicated by $J$. Sometimes, we write $A_{J}^{\top} =
(A_{J})^{\top}$.

By $\odot$, we denote the component-wise product of two vectors, i.e., for $x, z\in\R^{n}$, we have $(x\odot z)_{j} = x_{j}z_{j}$.

Furthermore, we define $\diag(x)$ to be the
$n\times n$ diagonal matrix having the entries of the vector~$x$ as
its diagonal elements.

As usual, $\norm{\cdot}_{1}$ and $\norm{\cdot}_{\infty}$ denote
the respective norms, i.e.,
\begin{equation*}
  \norm{x}_{1} = \sum_{j = 1}^{n}\abs{x_{j}} \quad \text{and} \quad \norm{x}_{\infty} = \max_{j = 1,\dots,n}\abs{x_{j}}.
\end{equation*}
The subdifferential of $\norm{\cdot}_{1}$ at $x$ is denoted by
\begin{equation*}
  \Sign(x) \define \partial\norm{\cdot}_{1}(x) = \big\{\xi\in [-1, 1]^{n} : x_{j} \neq 0 \Rightarrow \xi_{j} = \sign(x_{j})\big\}. 
\end{equation*}

Finally, for given primal variable $x$, dual variable $y$ and bound
$\delta$, we introduce the index sets
\begin{align*}
  J_{P} \define &\set{j : x_{j} \neq 0} &&\text{(primal support)},\\
  I_{P} \define &\set{i : \Abs{a_{i}^{\top}x - b_{i}} = \delta} &&\text{(primal active set of constraints)},\\
  J_{D} \define &\set{j : \Abs{A_{j}^{\top}y} = 1} &&\text{(dual active set)}\\
 \text{and}\quad I_{D} \define &\set{i : y_{i} \neq 0} &&\text{(dual support)},
\end{align*}
cf.~\eqref{eq:p_delta} and its dual problem~\eqref{eq:d_delta}
(defined below). Generally, for notational simplicity, we do not make
the sets' dependency on $x$, $y$ and $\delta$ explicit as it will be
clear from the context. Nevertheless, if we consider these index sets
for specific algorithmic iterates $x^{k}$, $y^{k}$ and $\delta^{k}$,
we write $J_{P}^{k}$, $I_{P}^{k}$, $J_{D}^{k}$, and $I_{D}^{k}$,
respectively. 

Set complements are denoted by a superscript $c$ and always pertain to
the respective natural superset; e.g., $J_P^c=\set{1,\dots,n}\setminus
J_P$ and $I_P^c=\set{1,\dots,m}\setminus I_P$.

\section{Homotopy Algorithm}\label{sec:homotopyalgorithm}
In the following, we describe our algorithmic approach in detail and
prove its correctness and finite convergence. A pseudocode of the
method is given in Algorithm~\ref{alg:homotopy_iteration} below.  With
a wink and a nod to a certain well-known basis pursuit solver, we call
our algorithm $\ell_1$-\textsc{Houdini} ($\ell_1$-norm HOmotopy UnDer
Infinity-Norm constraInts). Throughout, we assume w.l.o.g. that
$\delta<\norm{b}_\infty$ (otherwise, $x^*=0$ trivially
solves~\eqref{eq:p_delta}).

\subsection{Optimality Conditions and Algorithmic Idea}\label{sec:optimalitysystemandalgorithmicscheme}

It is well-known that $x^{*}$ is an optimal solution of
\eqref{eq:p_delta} if and only if there exists a $y^{*}$ such that
\begin{align}
  -A^\top y^{*} &\in \Sign(x^{*})\label{eq:oc_1}\\
 \text{and}\quad  Ax^{*} - b &\in \delta \Sign(y^{*}).\label{eq:oc_2}
\end{align}
In particular, such a $y^*$ is an optimal solution to the dual problem
of~\eqref{eq:p_delta},~i.e.,
\begin{equation}
  \label{eq:d_delta}
  \tag{D$_{\delta}$}
  \max_{y\in\R^{m}} \ -b^{\top}y - \delta\norm{y}_{1} \quad \st \ \Norm{A^{\top}y}_{\infty} \leq 1.
\end{equation}
Thus, we call $y^{*}$ a \emph{dual certificate} and $(x^{*}, y^{*})$
an \emph{optimal pair} for \eqref{eq:p_delta}.  In particular, any
optimal pair satisfies $\norm{x^{*}}_{1} =
-b^{\top}y^{*}-\delta\norm{y^{*}}_{1}$, i.e., the primal and the dual
problem attain the same optimal value. Note that, as a consequence of
the optimality conditions \eqref{eq:oc_1} and \eqref{eq:oc_2}, it
always holds that $J_{P}\subseteq J_{D}$ and $I_{D}\subseteq I_{P}$ in
case $(x^{*}, y^{*})$ is an optimal pair. 

Our approach is to find an optimal pair by repeatedly making use
of~\eqref{eq:oc_1} and~\eqref{eq:oc_2}. Instead of solving
(\ref{eq:p_delta}) directly, we start by setting $\delta^{0} \define
\norm{b}_{\infty}$ and observe that $x^{0} = 0$ is an optimal solution
of (P$_{\delta^{0}}$). Now, the main idea behind the iterations of our
method is the following: Let $k\in\N_{0}\define\N\cup\{0\}$ and $(x^{k}, y^{k})$ be an
optimal pair for (P$_{\delta^{k}}$). First, we seek a $y^{k+1}\neq
y^{k}$ such that $(x^{k}, y^{k+1})$ is still an optimal pair for
(P$_{\delta^{k}}$). After that, we aim at identifying $x^{k+1}$ and
$t>0$ such that with $\delta^{k+1} = \delta^{k} - t$, $(x^{k+1},
y^{k+1})$ is an optimal pair for (P$_{\delta^{k+1}}$). We repeat these
steps as long as $\delta^{k+1} > \delta$; when finally $\delta^{k+1} =
\delta$, we have found an optimal pair $(x^{k+1}, y^{k+1})$ for our
initial problem \eqref{eq:p_delta}.

We remark that while \eqref{eq:oc_1} and \eqref{eq:oc_2} show that,
e.g., $y^{0} = 0$ would be a valid dual certificate associated with
$x^0$ (other similarly simple choices are possible), such a heuristic
choice---then to be used for a first primal update step---may lead to
a ``zero step'' ($t=0$, $x^{1}=x^0$), after which a new dual iterate
must be computed. Therefore, in $\ell_1$-\textsc{Houdini}, we will
actually start with the computation of a dual certificate directly
(i.e., we do not need any $y^0$).

\subsection{Primal Updates}\label{subsec:primalupdates}
Suppose $(x^{k}, y^{k+1})$ is an optimal pair for
(P$_{\delta^{k}}$) and we seek $x^{k+1}$ and $t$ such that
$(x^{k+1}, y^{k+1})$ is an optimal pair for (P$_{\delta^{k}-t}$). From
\eqref{eq:oc_1} and \eqref{eq:oc_2} we know that $x^{k+1}$ and $t$
must fulfill
\begin{equation*}
  -A^{\top}y^{k+1} \in \Sign(x^{k+1}) \quad \text{and} \quad Ax^{k+1} - b \in (\delta^{k}-t) \Sign(y^{k+1}).
\end{equation*}
The first condition restricts both the support and the sign of~$x^{k+1}$, i.e., it must hold that
\begin{align*}
  x^{k+1}_{j} &= 0 \qquad\text{if}\quad\abs{(A^{\top}y^{k+1})_{j}} < 1,\\
  x^{k+1}_{j} &\geq 0 \qquad\text{if}\quad(A^{\top}y^{k+1})_{j} = -1\\
  \text{and}\quad x^{k+1}_{j} &\leq 0 \qquad\text{if}\quad(A^{\top}y^{k+1})_{j} = 1,
\end{align*}
or equivalently,
\begin{equation}
  \label{eq:primal_update_support_sign}
  x^{k+1}_{J_{D}^{c}} = 0 \quad \text{and} \quad (A_{J_{D}}^{\top}y^{k+1}) \odot x^{k+1}_{J_{D}} \leq 0.
\end{equation}
We split the second condition and start with the components $I_{D}$
in which $y^{k+1}$ is non-zero and thus, $\Sign(y_{I_{D}}^{k+1}) =
\sign(y_{I_{D}}^{k+1})$ is single-valued. This leads us to a linear
system in $x^{k+1}$ and $t$:
\begin{equation}
  \label{eq:primal_update_equations}
  A^{I_{D}}x^{k+1} + t\cdot\sign(y^{k+1}_{I_{D}}) = b + \delta^{k}\sign(y^{k+1}_{I_{D}}).
\end{equation}
The remainder of the second condition dictates the inclusions
\begin{equation*}
  a_{i}^{\top}x^{k+1} - b_{i} \in [-(\delta^{k}-t), \delta^{k} - t] \qquad\text{for all }i\in I_{D}^{c},
\end{equation*}
 which are equivalent to the linear constraints
\begin{equation}
  \label{eq:primal_update_inequations}
  -A^{I_{D}^{c}}x^{k+1} + t\One \leq \delta^{k}\One - b_{I_{D}^{c}} \qquad \text{and} \qquad A^{I_{D}^{c}}x^{k+1} + t\One \leq \delta^{k}\One + b_{I_{D}^{c}}.
\end{equation}
Finally, intuitive bounds for $t$ are given by
\begin{equation}
  \label{eq:primal_update_t}
  0 \leq t \leq \delta^{k} - \delta.
\end{equation}
Therein, the lower bound prevents regress and the upper bound ensures
that we do not jump over an optimal solution of the original problem
(recall that under our assumption $\delta<\norm{b}_\infty$, any
optimal solution of~\eqref{eq:p_delta} lies on the boundary of the
feasible set).

Note that, by construction, $x^{k+1} = x^{k}$ and $t = 0$ always yield
a solution of
\eqref{eq:primal_update_support_sign}--\eqref{eq:primal_update_t}. Nevertheless,
this choice would imply stagnation (the aforementioned ``zero
step''). In contrast, we can perform a maximal step with respect to
the current iterates $(x^{k}, y^{k+1})$ by maximizing~$t$ w.r.t. the
constraints
\eqref{eq:primal_update_support_sign}--\eqref{eq:primal_update_t},
which amounts to solving a linear program. (Note that the number of
variables is substantially reduced by eliminating
$x_{J_{D}^{c}}^{k+1}$, which must be zero; typically, $J_D$ will be
very small---and hence, $J_D^c$ large---at least in the beginning,
although generally this depends on the structure of~$b$.) 

In case the maximum objective is $t=0$, no progress is achievable by
performing a primal update; we will see later
(cf. Lemma~\ref{lemma:iterates}) that this case, in fact, never occurs
during our algorithm. Also, since $t=0$ is always possible, the lower
bound $t\geq 0$ is redundant and can be omitted
from~\eqref{eq:primal_update_t}.

\subsection{Dual Updates}\label{subsec:dualupdates}
The dual update follows the same principle as the primal update except
that here, $x^{k}$ and $\delta^{k}$ are fixed and we seek $y^{k+1}$
such that
\begin{equation*}
  -A^{\top}y^{k+1} \in \Sign(x^{k}) \quad \text{and} \quad Ax^{k} - b \in \delta^{k} \Sign(y^{k+1}).
\end{equation*}
Here, the second condition restricts the support and the sign of
$y^{k+1}$, i.e.,
\begin{equation}
  \label{eq:dual_update_support_sign}
  y^{k+1}_{I_{P}^{c}} = 0 \quad \text{and} \quad -\sign(A^{I_{P}}x^{k} - b_{I_{P}}) \odot y^{k+1}_{I_{P}} \leq 0.
\end{equation}
We split the first condition. Starting with the primal support
$J_{P}$, on which $\Sign(x^{k}_{J_{P}}) = \sign(x^{k}_{J_{P}})$ is
single-valued, we obtain the linear  system
\begin{equation}
  \label{eq:dual_update_equations}
  -A_{J_{P}}^{\top}y^{k+1} = \sign(x_{J_{P}}^{k}).
\end{equation}
On the complementary components $J_{P}^{c}$, the first condition
yields the linear constraints
\begin{equation}
  \label{eq:dual_update_inequations}
  -\One \leq A_{J_{P}^{c}}^{\top}y^{k+1} \leq \One.
\end{equation}

Just as in case of the primal update, there is a trivial solution to
\eqref{eq:dual_update_support_sign}--\eqref{eq:dual_update_inequations},
namely $y^{k+1} = y^{k}$. Moreover, we can again exploit that the
feasible support $I_{P}$ of $y^{k+1}$ will, at least in the beginning,
be small (so that many variables $y^{k+1}_{I_P^c}=0$). However, in
contrast to the primal update, where it was obvious to maximize~$t$,
it is not directly clear which solution we should prefer in case
\eqref{eq:dual_update_support_sign}--\eqref{eq:dual_update_inequations}
does not have a unique feasible point. The following theorem of
alternatives gives an answer to this problem.

\subsection{A Theorem of the Alternative}\label{subsec:theoremsofthealternative}
The following results provide, in particular, a selection rule for the
dual update which forms a key element for a working algorithm since it
guarantees the subsequent primal update to be successful (i.e., not a
``zero step''). 

The two alternatives \eqref{1.0}--\eqref{1.4} and
\eqref{2.1}--\eqref{2.4} in the lemma below are linear (in-)equality systems that
\emph{improvement directions} must obey (when interpreting primal and
dual updates as moving from $x^k$ to $x^{k}+t\cdot d$ and from $y^k$
to $y^k+s\cdot e$, respectively). 

\begin{lemma}
  \label{lemma:existence_of_directions}
  Let $(\hat x, \hat y)$ be an optimal pair for (P$_{\hat \delta}$)
  for some $0\leq \hat\delta<\norm{b}_\infty$. Then, one and only one
  of the systems
  \begingroup\allowdisplaybreaks
  \begin{subequations}
    \begin{alignat}{3}
      -\sign(A\hat x - b)^{\top}&e &\quad < &\quad 0\label{1.0}\\
      A_{J_{P}}^{\top}&e &\quad = &\quad 0\label{1.1}\\
      A_{J_{D}\setminus J_{P}}^{\top}\hat y \odot A_{J_{D}\setminus J_{P}}^{\top}&e &\quad \leq &\quad 0\label{1.2}\\
      -\sign(A^{I_{P}\setminus I_{D}}\hat x - b_{I_{P}\setminus I_{D}}) \, \odot \, &e_{I_{P}\setminus I_{D}} &\quad \leq &\quad 0\label{1.3}\\
      &e_{I_{P}^{c}} &\quad = &\quad 0\label{1.4}
    \end{alignat}
  \end{subequations}
  \endgroup
  and
  \begingroup\allowdisplaybreaks
  \begin{subequations}
    \begin{alignat}{3}
      A^{I_{D}}&d &\quad = &\quad -\sign(\hat y_{I_{D}})\label{2.1}\\
      \sign(A^{I_{P}\setminus I_{D}}\hat x - b_{I_{P}\setminus I_{D}}) \odot A^{I_{P}\setminus I_{D}} &d &\quad \leq &\quad -\One\label{2.2}\\
      A_{J_{D}\setminus J_{P}}^{\top}\hat y \, \odot \, &d_{J_{D}\setminus J_{P}} &\quad \leq &\quad 0\label{2.3}\\
      &d_{J_{D}^{c}} &\quad = &\quad 0.\label{2.4}
    \end{alignat}
  \end{subequations}
  \endgroup
  has a solution.
\end{lemma}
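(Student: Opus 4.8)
This is a theorem of the alternative, so the plan splits in two: show the two systems are never simultaneously solvable (by a short pairing argument), and show that infeasibility of \eqref{1.0}--\eqref{1.4} forces solvability of \eqref{2.1}--\eqref{2.4} (via a Farkas-type transposition theorem, with a solution $d$ read off from the multipliers). I would first treat $\hat\delta>0$ --- the case actually arising in the algorithm, where the dual update invokes the lemma at iterates with $\delta^{k}>0$ --- and afterwards address the borderline case $\hat\delta=0$, in which $A\hat x=b$ already makes \eqref{1.0} the impossible $0<0$.

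For the ``not both'' direction, suppose $e$ solves the first system and $d$ the second, and evaluate $e^{\top}Ad$ in two ways. Since $d_{J_{D}^{c}}=0$ by \eqref{2.4} only columns in $J_{D}$ contribute, and since $e_{I_{P}^{c}}=0$ by \eqref{1.4} only rows in $I_{P}$ contribute. Splitting the rows over $I_{D}$ and $I_{P}\setminus I_{D}$: on $I_{D}$, \eqref{2.1} forces $(Ad)_{i}=-\sign(\hat y_{i})$, and $\sign(\hat y_{i})=\sign(a_{i}^{\top}\hat x-b_{i})$ there by \eqref{eq:oc_2} (using $\hat\delta>0$); on $I_{P}\setminus I_{D}$, the sign restriction \eqref{1.3} together with the strict inequality \eqref{2.2} yields $e_{i}(Ad)_{i}\leq-\sign(a_{i}^{\top}\hat x-b_{i})\,e_{i}$; summing and invoking \eqref{1.0} gives $e^{\top}Ad<0$. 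Splitting instead over columns, the $J_{P}$-terms vanish by \eqref{1.1}, while on $J_{D}\setminus J_{P}$ one multiplies the inequalities \eqref{1.2} and \eqref{2.3} --- legitimate since $|A_{j}^{\top}\hat y|=1$ --- to obtain $(A_{j}^{\top}e)d_{j}\geq0$, hence $e^{\top}Ad\geq0$. Contradiction.

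For the ``at least one'' direction, I would first recast the first system in canonical form: because $|a_{i}^{\top}\hat x-b_{i}|=\hat\delta>0$ on $I_{P}$ and $|A_{j}^{\top}\hat y|=1$ on $J_{D}$, multiplying \eqref{1.2} and \eqref{1.3} by the corresponding $\pm1$ diagonal matrices turns them into homogeneous inequalities $Ce\leq0$, whereas \eqref{1.1} and \eqref{1.4} form homogeneous equalities $Be=0$ and \eqref{1.0} is the single strict inequality $p^{\top}e>0$ with $p\define\sign(A\hat x-b)$ (its entries on $I_{P}^{c}$ being immaterial since $e_{I_{P}^{c}}=0$). The standard Farkas alternative for this configuration states that $\{p^{\top}e>0,\ Ce\leq0,\ Be=0\}$ is infeasible exactly when $p$ is a nonnegative combination of the rows of $C$ plus an arbitrary combination of the rows of $B$; denote the corresponding multipliers by $\lambda\geq0$ (for \eqref{1.2}), $\mu\geq0$ (for \eqref{1.3}) and $w$ (free, for \eqref{1.1}). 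I would then set $d_{J_{D}^{c}}=0$, $d_{j}=-w_{j}$ on $J_{P}$, and $d_{j}=-\lambda_{j}(A_{j}^{\top}\hat y)$ on $J_{D}\setminus J_{P}$; reading the Farkas identity component by component produces \eqref{2.1} on $I_{D}$ (again through $\sign(a_{i}^{\top}\hat x-b_{i})=\sign(\hat y_{i})$), \eqref{2.2} on $I_{P}\setminus I_{D}$ (the factor $1+\mu_{i}\geq1$ supplying the needed $\leq-\One$), \eqref{2.3} from $(A_{j}^{\top}\hat y)\,d_{j}=-\lambda_{j}\leq0$, and \eqref{2.4} by construction.

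The sign bookkeeping is routine; the real obstacle is the setup in the second part --- isolating the single \emph{strict} inequality \eqref{1.0} so that the Farkas alternative certifies ``$p^{\top}e>0$ unsolvable'' rather than merely ``$p^{\top}e\leq0$ unsolvable'', and matching the multipliers to the blocks of $d$ so that the \emph{equality} constraints \eqref{2.1} are met exactly, which is precisely where the optimal-pair identity $\sign(a_{i}^{\top}\hat x-b_{i})=\sign(\hat y_{i})$ on $I_{D}$ is indispensable.
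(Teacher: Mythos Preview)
Your approach is correct and essentially the same as the paper's: both normalize the sign constraints by the diagonal matrices $\Sigma_{1}=\diag(\sign(A^{I_{P}\setminus I_{D}}\hat x-b_{I_{P}\setminus I_{D}}))$ and $\Sigma_{2}=\diag(A_{J_{D}\setminus J_{P}}^{\top}\hat y)$, apply Farkas' Lemma (the paper cites Schrijver, Corollary~7.1d), and then undo the substitutions to recover the stated second system, using the optimal-pair identity $\sign(A^{I_{D}}\hat x-b_{I_{D}})=\sign(\hat y_{I_{D}})$ exactly where you do. The only stylistic difference is that you prove ``not both'' separately via the bilinear pairing $e^{\top}Ad$, whereas the paper lets Farkas' Lemma---itself an exactly-one alternative---carry both directions at once; your extra argument is elegant but redundant.

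One caveat on the borderline case $\hat\delta=0$: you correctly note that \eqref{1.0} collapses to $0<0$, but completing the lemma then requires \eqref{2.1}--\eqref{2.4} to be feasible, and if $I_{P}\setminus I_{D}\neq\emptyset$ the left side of \eqref{2.2} is identically zero (since $A\hat x=b$), so \eqref{2.2} reads $0\leq-\One$. The paper's own proof has the same blind spot---the identity $\sign(A^{I_{D}}\hat x-b_{I_{D}})=\sign(\hat y_{I_{D}})$ fails when $\hat\delta=0$---and since the algorithm invokes the lemma only at $\delta^{k}>\delta\geq 0$, the hypothesis should presumably read $0<\hat\delta$; under that correction your argument is complete.
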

\begin{proof}
  With $\Sigma_{1} \define \diag(\sign(A^{I_{P}\setminus I_{D}}\hat x
  - b_{I_{P}\setminus I_{D}}))$ and $\Sigma_{2} \define
  \diag(A_{J_{D}\setminus J_{P}}^{\top}\hat y)$, we have $\Sigma_{1} =
  \Sigma_{1}^{-1}$ as well as $\Sigma_{2} = \Sigma_{2} ^{-1}$ and can
  rewrite the first system as
  \begin{alignat*}{5}
    -\One^{\top}\Sigma_{1}&e_{I_{P}\setminus I_{D}} &\quad -\sign(A^{I_{D}}\hat x - b_{I_{D}})^{\top}&e_{I_{D}} &\quad < &\quad 0\\
    -\Sigma_{2}  (A_{J_{D}\setminus J_{P}}^{I_{P}\setminus I_{D}})^{\top}&e_{I_{P}\setminus I_{D}} &\quad -\Sigma_{2}  (A_{J_{D}\setminus J_{P}}^{I_{D}})^{\top}&e_{I_{D}} &\quad \geq &\quad 0\\
    (A_{J_{P}}^{I_{P}\setminus I_{D}})^{\top}&e_{I_{P}\setminus I_{D}} &\quad +(A_{J_{P}}^{I_{D}})^{\top}&e_{I_{D}} &\quad= &\quad 0\\
    \Sigma_{1} &e_{I_{P}\setminus I_{D}} & & &\quad \geq &\quad 0.
  \end{alignat*}
  We substitute $\hat e_{I_{P}\setminus I_{D}} \define \Sigma_{1} e_{I_{P}\setminus I_{D}}$ and observe that the system has a solution if and only if the system
  \begin{alignat*}{5}
    -\One^{\top}&\hat e_{I_{P}\setminus I_{D}} &\quad - \sign(A^{I_{D}}\hat x - b_{I_{D}})^{\top}&e_{I_{D}} &\quad < &\quad 0\\
    -\Sigma_{2}  (A_{J_{D}\setminus J_{P}}^{I_{P}\setminus I_{D}})^{\top} \Sigma_{1} &\hat e_{I_{P}\setminus I_{D}} &\quad -\Sigma_{2}  (A_{J_{D}\setminus J_{P}}^{I_{D}})^{\top}&e_{I_{D}} &\quad \geq &\quad 0\\
    (A_{J_{P}}^{I_{P}\setminus I_{D}})^{\top}\Sigma_{1} &\hat e_{I_{P}\setminus I_{D}} &\quad +(A_{J_{P}}^{I_{D}})^{\top}&e_{I_{D}} &\quad = &\quad 0\\
    &\hat e_{I_{P}\setminus I_{D}} & & &\quad \geq &\quad 0
  \end{alignat*}
  is feasible. By Farkas' Lemma (see, e.g.,
  \cite[Corollary~7.1d]{Schrijver1986}), this system has a
  solution if and only if the associated alternative system
  \begin{alignat*}{5}
    -\Sigma_{1} A_{J_{D}\setminus J_{P}}^{I_{P}\setminus I_{D}} \Sigma_{2}  &\hat d_{J_{D}\setminus J_{P}}&\quad + \Sigma_{1} A_{J_{P}}^{I_{P}\setminus I_{D}}&d_{J_{P}} &\quad \leq &\quad -\One\\
    -A_{J_{D}\setminus J_{P}}^{I_{D}} \Sigma_{2} &\hat d_{J_{D}\setminus J_{P}} &\quad  + A_{J_{P}}^{I_{D}}&d_{J_{P}} &\quad = &\quad -\sign(A^{I_{D}}\hat x - b_{I_{D}})\\
    &\hat d_{J_{D}\setminus J_{P}} & & &\quad \geq &\quad 0
  \end{alignat*}
  is infeasible. Since $\sign(A^{I_{D}}\hat x -
  b_{I_{D}}) = \sign(\hat y_{I_{D}})$ and by substituting
  $d_{J_{D}\setminus J_{P}} \define -\Sigma_{2} \hat d_{J_{D}\setminus J_{P}}$, we obtain that equivalently, 
  \begin{alignat*}{3}
    \Sigma_{1} A_{J_{D}}^{I_{P}\setminus I_{D}} &d_{J_{D}} &\quad \leq &\quad -\One\\
    A_{J_{D}}^{I_{D}} &d_{J_{D}} &\quad = &\quad -\sign(\hat y_{I_{D}})\\
    -\Sigma_{2}  &d_{J_{D}\setminus J_{P}} &\quad \geq &\quad 0
  \end{alignat*}
  is infeasible.  The claim now follows by explicitly including
  $e_{I_P^c}=0$ and $d_{J_D^c}=0$ in the respective systems.  \qed
\end{proof}
In fact, our algorithm does not use explicit direction vectors, but
the above first set of alternative systems will be useful for the
proof of the next result and may also be of interest in its own
right. Below, note that in \eqref{3.0}--\eqref{3.5} and
\eqref{4.0}--\eqref{4.4}, we recognize the primal and dual update
conditions derived in the previous two subsections, respectively.

\begin{theorem}
  \label{theorem:alternatives}
  Let $(\hat x, \hat y)$ be an optimal pair for (P$_{\hat \delta}$)
  for some $0\leq\hat\delta<\norm{b}_\infty$. Then, the following four
  alternatives are equivalent.
  \begin{enumerate}[\normalfont (I)]
  \item\label{alt1} The system \eqref{1.0}--\eqref{1.4} is feasible.

  \item\label{alt2} The system \eqref{2.1}--\eqref{2.4} is infeasible.

  \item\label{alt3} $(\hat x, 0)$ is an optimal solution of
    \begin{subequations}
      \begin{alignat}{7}
        \max_{(x,t) \in \R^{n}\times\R} && & & &t\label{3.0}\\[-0.5em]
        \st~~~ && & &\quad A^{I_{D}}&x - b_{I_{D}} &\quad = &\quad (\hat \delta- t)\sign(\hat y_{I_{D}})\label{3.1}\\
        &&\quad(t-\hat \delta)\One &\quad \leq &\quad A^{I_{D}^{c}}&x - b_{I_{D}^{c}} &\quad \leq &\quad (\hat \delta- t)\One\label{3.2}\\
        && & &\quad A^{\top}\hat y\, \odot \, &x &\quad \leq &\quad 0\label{3.3}\\
        && & & &x_{J_{D}^{c}} &\quad = &\quad 0\label{3.4}\\
        && & & &t &\quad \leq &\quad \hat \delta - \delta\label{3.5}.
      \end{alignat}
    \end{subequations}
  \item\label{alt4} $\hat y$ is not an optimal solution of
    \begin{subequations}
      \begin{alignat}{6}
        \min_{y\in\R^{m}} && & &\quad-\sign(A\hat x - b)^{\top} &y\label{4.0}\\[-0.5em]
        \st  && & &\quad -A_{J_{P}}^{\top}&y &\quad = &\quad \sign(\hat x_{J_{P}})\label{4.1}\\
        &&\quad -\One &\quad \leq &\quad -A_{J_{P}^{c}}^{\top}&y &\quad \leq &\quad \One\label{4.2}\\
        && & &\quad -\sign(A\hat x - b) \, \odot \, &y &\quad \leq &\quad 0\label{4.3}\\
        && & & &y_{I_{P}^{c}} &\quad = &\quad 0\label{4.4}.
      \end{alignat}
    \end{subequations}
  \end{enumerate}
\end{theorem}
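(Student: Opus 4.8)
The plan is to prove the chain of equivalences (I) $\Leftrightarrow$ (II) $\Leftrightarrow$ (III) together with (I) $\Leftrightarrow$ (IV). The equivalence (I) $\Leftrightarrow$ (II) is exactly Lemma~\ref{lemma:existence_of_directions}, which states that precisely one of the two direction systems is solvable, so the real work is to translate the two LP-(non)optimality statements (III) and (IV) into solvability of a direction system. Throughout I would use the following consequences of $(\hat x,\hat y)$ being an optimal pair: $J_P\subseteq J_D$, $I_D\subseteq I_P$, $(A^{\top}\hat y)_j\,\hat x_j=-\abs{\hat x_j}<0$ for $j\in J_P$, and $\sign(a_i^{\top}\hat x-b_i)\,\hat y_i=\abs{\hat y_i}$ for $i\in I_D$ (all from \eqref{eq:oc_1}--\eqref{eq:oc_2}, where I henceforth take $\hat\delta>0$ so that the residual signs on $I_P$ are $\pm1$; the borderline value $\hat\delta=0$, which is not encountered as an input inside the homotopy, needs minor separate attention). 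A preliminary computation, obtained by rewriting \eqref{eq:oc_1}--\eqref{eq:oc_2} and primal/dual feasibility of $(\hat x,\hat y)$ blockwise and using $\hat\delta\ge\delta$ for \eqref{3.5}, shows that $(\hat x,0)$ is feasible for \eqref{3.0}--\eqref{3.5} and that $\hat y$ is feasible for \eqref{4.0}--\eqref{4.4} with objective value $-\norm{\hat y}_1$.

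For (II) $\Leftrightarrow$ (III) I would prove two implications. First, if $d$ solves \eqref{2.1}--\eqref{2.4}, then $(\hat x+t d,\,t)$ is feasible for \eqref{3.0}--\eqref{3.5} for all sufficiently small $t>0$, hence $(\hat x,0)$ is not optimal; here every scalar constraint of \eqref{3.1}--\eqref{3.5} is of one of three kinds: it holds with equality for every $t$ (namely \eqref{3.1} by \eqref{2.1}, and the $J_D^{c}$-block of \eqref{3.4} by \eqref{2.4}); it holds with \emph{strict} slack at $(\hat x,0)$ and therefore persists for small $t$ (the inactive rows $i\in I_P^{c}$ of \eqref{3.2}, the entries $j\in J_P$ of \eqref{3.3} since $(A^{\top}\hat y)_j\hat x_j<0$ there, and \eqref{3.5} when $\hat\delta>\delta$); or it is active on one side only, that side being controlled by the one-sided inequalities \eqref{2.2} (rows $i\in I_P\setminus I_D$ of \eqref{3.2}) and \eqref{2.3} (entries $j\in J_D\setminus J_P$ of \eqref{3.3}), while the opposite bound has slack of order $\hat\delta$. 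Second, if $(\hat x,0)$ is not optimal, pick a feasible $(x,t)$ with $t>0$ and set $d\define(x-\hat x)/t$; subtracting the relations \eqref{3.1}, \eqref{3.3}, \eqref{3.4} satisfied by $(\hat x,0)$ from those satisfied by $(x,t)$ gives \eqref{2.1}, \eqref{2.3}, \eqref{2.4}, while \eqref{2.2} follows because $\abs{a_i^{\top}x-b_i}\le\hat\delta-t$ together with $a_i^{\top}\hat x-b_i=\hat\delta\sign(a_i^{\top}\hat x-b_i)$ forces $\sign(a_i^{\top}\hat x-b_i)\,a_i^{\top}d\le\bigl((\hat\delta-t)-\hat\delta\bigr)/t=-1$.

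The equivalence (I) $\Leftrightarrow$ (IV) is the verbatim dual analogue, with $x$ and the primal LP replaced by $y$ and the dual LP: from a solution $e$ of \eqref{1.0}--\eqref{1.4} one checks, by the same trichotomy of constraints (now invoking \eqref{1.1}--\eqref{1.4}), that $\hat y+s e$ is feasible for \eqref{4.0}--\eqref{4.4} for small $s>0$, with objective $-\norm{\hat y}_1-s\,\sign(A\hat x-b)^{\top}e<-\norm{\hat y}_1$ by \eqref{1.0}, so $\hat y$ is not optimal; conversely, from any feasible $y$ with objective strictly below $-\norm{\hat y}_1$, the vector $e\define y-\hat y$ solves \eqref{1.0}--\eqref{1.4}, where \eqref{1.0} is exactly the objective-gap inequality and \eqref{1.1}--\eqref{1.4} arise by differencing the constraints \eqref{4.1}--\eqref{4.4}. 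Combined with (I) $\Leftrightarrow$ (II) $\Leftrightarrow$ (III), this yields the asserted fourfold equivalence.

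I expect the main obstacle to be the ``a direction implies non-optimality'' half of each equivalence, i.e., verifying that a sufficiently small step along the candidate direction stays feasible for the respective LP. This is the only place where the optimality conditions \eqref{eq:oc_1}--\eqref{eq:oc_2} are genuinely needed rather than mere feasibility of $(\hat x,\hat y)$, since they supply the strict slack $(A^{\top}\hat y)_j\hat x_j<0$ on $J_P$ (resp.\ $\sign(a_i^{\top}\hat x-b_i)\hat y_i<0$ on $I_D$) that keeps the complementarity-type constraints \eqref{3.3} (resp.\ \eqref{4.3}) satisfied under perturbation. Everything else is careful but routine bookkeeping over the index blocks $J_P,\ J_D\setminus J_P,\ J_D^{c}$ and $I_D,\ I_P\setminus I_D,\ I_P^{c}$, plus the single appeal to Lemma~\ref{lemma:existence_of_directions}.
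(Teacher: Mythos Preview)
Your proposal is correct and follows essentially the same route as the paper's own proof: the equivalence (I)$\Leftrightarrow$(II) is quoted from Lemma~\ref{lemma:existence_of_directions}, (II)$\Leftrightarrow$(III) is argued by moving along $(\hat x+td,t)$ for small $t>0$ in one direction and setting $d=(x-\hat x)/t$ in the other, and (I)$\Leftrightarrow$(IV) is the dual analogue via $\hat y+se$ and $e=y-\hat y$. The only cosmetic differences are that the paper takes $y$ to be an actual minimizer in the (IV)$\Rightarrow$(I) step (your choice of any feasible $y$ with strictly smaller objective works just as well), and that it does not single out the degenerate case $\hat\delta=0$.
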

\begin{proof}
  Lemma \ref{lemma:existence_of_directions} already shows that
  alternatives~\eqref{alt1} and~\eqref{alt2} are equivalent.

  Moreover, since $(\hat x, \hat y)$ forms an optimal pair for
  (P$_{\hat \delta}$), several relations corresponding to constraints
  in the optimization problems of alternatives~\eqref{alt3}
  and~\eqref{alt4} already hold true, by the optimality conditions and
  the definitions of the index respective sets: Due
  to~\eqref{eq:oc_1}, \eqref{4.1} and \eqref{4.2} are satisfied, and
  due to~\eqref{eq:oc_2}, so are \eqref{3.1} and \eqref{3.2} for
  $t=0$, i.e., we have
  \begin{align*}
    -A_{J_{P}}^{\top}\hat y &&= &&\sign(\hat x_{J_{P}}), &&-\One &\leq &&-A_{J_{P}^{c}}^{\top}\hat y &&\leq &&\One,\\
    A^{I_{D}}\hat x - b_{I_{D}} &&= &&\hat \delta\sign(\hat y_{I_{D}}), &&-\hat\delta\One &\leq &&A^{I_{D}^{c}}\hat x - b_{I_{D}^{c}} &&\leq &&\hat\delta\One.
  \end{align*}
  By definition of the active sets $I_P$ and $J_D$ together with
  \eqref{eq:oc_1} and \eqref{eq:oc_2} (in other words, by
  complementary slackness) , \eqref{3.4} and \eqref{4.4} are also
  satisfied, i.e., $\hat x_{J_{D}^{c}}=0$ and $\hat
  y_{I_{P}^{c}}=0$. Finally, \eqref{3.3} follows from \eqref{eq:oc_1}
  and \eqref{4.3} from \eqref{eq:oc_2}, and since $\hat x_j\neq 0$ for
  all $j\in J_P$ and $\hat y_i\neq 0$ for all $i\in I_D$, we obtain, in
  particular, that
  \begin{align*}
    A_{J_{P}}^{\top}\hat y \odot \hat x_{J_{P}} &<0\\
    \text{and}\qquad -\sign(A^{I_{D}}\hat x - b_{I_{D}}) \odot \hat y_{I_{D}} &< 0.
  \end{align*}

  Keeping the above relations in mind, we proceed to show the
  equivalence of alternatives~\eqref{alt2} and~\eqref{alt3}: 

  Suppose that alternative~\eqref{alt2} is \emph{not} true, i.e.,
  there exists a~$d$ that satisfies \eqref{2.1}--\eqref{2.4}. As $d$
  fulfills \eqref{2.1} and \eqref{2.4}, we get that for each $t>0$,
  $(\hat x + td, t)$ fulfills \eqref{3.1} and \eqref{3.4},
  respectively. From \eqref{2.2} we obtain the existence of a
  $t_{1}>0$ such that $(\hat x+td, t)$ satisfies \eqref{3.2} for all
  $0\leq t\leq t_{1}$, and because of \eqref{2.3}, there exists a
  $t_{2}>0$ such that $(\hat x+td, t)$ fulfills \eqref{3.3} for all
  $0\leq t\leq t_{2}$. Consequently, we can choose $t=\min(t_{1},
  t_{2}, \hat \delta-\delta) > 0$ and have a corresponding feasible
  solution $(\hat x+td, t)$ of~\eqref{2.1}--\eqref{2.4}, which shows
  that alternative~\eqref{alt3} is not true either.

  Conversely, suppose that alternative~\eqref{alt3} is not true, i.e.,
  there exists a pair $(x, t)$ with $t > 0$ that satisfies
  \eqref{3.1}--\eqref{3.5}. We easily see that $d = (x - \hat x) / t$
  obeys \eqref{2.1}. Obviously, by construction, also \eqref{2.4}
  holds for~$d$. Moreover, it holds that
  \begin{align*}
    &\sign(A^{I_{P}\setminus I_{D}}\hat x - b_{I_{P}\setminus I_{D}}) \odot A^{I_{P}\setminus I_{D}}d\\
    = &\tfrac{1}{t}\sign(A^{I_{P}\setminus I_{D}}\hat x - b_{I_{P}\setminus I_{D}})\odot ([A^{I_{P}\setminus I_{D}}x - b_{I_{P}\setminus I_{D}}] - [A^{I_{P}\setminus I_{D}}\hat x - b_{I_{P}\setminus I_{D}}])\\
    = &\tfrac{1}{t}\sign(A^{I_{P}\setminus I_{D}}\hat x - b_{I_{P}\setminus I_{D}})\odot\sign(A^{I_{P}\setminus I_{D}} x - b_{I_{P}\setminus I_{D}})\odot\abs{A^{I_{P}\setminus I_{D}} x - b_{I_{P}\setminus I_{D}}}\\
    &- \tfrac{1}{t}\sign(A^{I_{P}\setminus I_{D}}\hat x - b_{I_{P}\setminus I_{D}})\odot\sign(A^{I_{P}\setminus I_{D}} \hat x - b_{I_{P}\setminus I_{D}})\odot\abs{A^{I_{P}\setminus I_{D}} \hat x - b_{I_{P}\setminus I_{D}}}\\
    \leq &\tfrac{\hat \delta- t}{t}\One - \tfrac{\hat \delta}{t}\One = -\One,
  \end{align*}
  so~$d$ satisfies \eqref{2.2} as well. Finally, \eqref{2.3} also
  holds true, since
  \begin{equation*}
    A^{\top}_{J_{D}\setminus J_{P}}\hat y\odot d_{J_{D}\setminus J_{P}} = \tfrac{1}{t}\underbrace{A^{\top}_{J_{D}\setminus J_{P}}\hat y\odot x_{J_{D}\setminus J_{P}}}_{\leq 0} - \tfrac{1}{t}A^{\top}_{J_{D}\setminus J_{P}}\hat y\odot \underbrace{\hat x_{J_{D}\setminus J_{P}}}_{= 0} \leq 0.
  \end{equation*}
  Thus, we conclude that alternative~\eqref{alt2} is indeed not true either.

  To complete the proof, it now suffices to show that
  alternatives~\eqref{alt1} and~\eqref{alt4} are equivalent. First,
  suppose that alternative~\eqref{alt1} is true, i.e., there exists
  an~$e$ that satisfies \eqref{1.0}--\eqref{1.4}. For arbitrary $s >
  0$, the vector $\hat y + se$ still obeys \eqref{4.1} and
  \eqref{4.4}, because of \eqref{1.1} and \eqref{1.4},
  respectively. Furthermore, \eqref{1.2} ensures that there exists an
  $s_{1}>0$ such that $\hat y + se$ still satisfies \eqref{4.2} for
  $0\leq s\leq s_{1}$, and \eqref{1.3} ensures the existence of an
  $s_{2}>0$ such that $\hat y + se$ obeys \eqref{4.3} for $0\leq s\leq
  s_{2}$. Thus, we can choose $s=\min(s_{1}, s_{2})$ and obtain that
  $\hat y + se$ satisfies \eqref{4.1}--\eqref{4.4}. Moreover,
  \eqref{1.0} shows that $-\sign(A\hat x - b)^{\top}(\hat y + se) <
  -\sign(A\hat x - b)^{\top}\hat y$ and it follows that $\hat y$ is
  \emph{not} the minimizer of \eqref{4.0}--\eqref{4.4} and thus, that
  alternative~\eqref{alt4} is true.

  Now, suppose conversely that alternative~\eqref{alt4} is true and
  that $y\neq \hat y$ is a minimizer of \eqref{4.0}--\eqref{4.4}. Then, $e \define y - \hat y$
  satisfies $-\sign(A\hat x - b)^{\top}(\hat y + e) < -\sign(A\hat x -
  b)^{\top}\hat y$, which shows that~$e$ obeys \eqref{1.0}. Moreover,
  \eqref{4.1}--\eqref{4.4} continue to hold for $\hat y + e$, which
  implies that $e$ satisfies \eqref{1.1}--\eqref{1.4} as well, and
  consequently, that alternative~\eqref{alt1} is true.\qed
\end{proof}

\subsection{$\ell_1$-\textsc{Houdini} Algorithm and Finite Termination}
\label{sec:algorithmandfinitetimetermination}

Theorem~\ref{theorem:alternatives} suggests the following algorithm: For a given $\delta_{k}>\delta$ and an optimal pair $(x^{k},y^{k})$ do: First update $y^{k+1}$ as a solution to \eqref{4.0}--\eqref{4.4} (with $\hat x=x^{k}$) and then find an updated $x^{k+1}$ and a $t^{k+1}>0$ as solution of \eqref{3.0}--\eqref{3.5} (with $\hat y=y^{k+1}$ and $\hat \delta = \delta^{k}$). In detail this is described in Algorithm~\ref{alg:homotopy_iteration}.

\SetAlFnt{\normalsize}
\DontPrintSemicolon
\begin{algorithm}[t]
 \KwIn{$A\in\R^{m\times n}$, $b\in\R^m$, $0 \leq \delta < \norm{b}_{\infty}$}
 \KwOut{solution $x^*$ to problem~\eqref{eq:p_delta}}   
 \BlankLine
 \tcp{Initialization:}
  $\delta^{0} \leftarrow \norm{b}_{\infty}$\;
  $x^{0} \leftarrow 0$\;

  $I_{P} \leftarrow \set{i : \abs{b_{i}} = \delta^{0}}$\;
  $J_{P} \leftarrow \emptyset$\;
  $k\leftarrow 0$\;
  \BlankLine
  \Repeat{$\delta^{k} = \delta$}{
    \tcp{Dual update:}
    $y^{k+1} \leftarrow$ solution of problem \eqref{4.0}--\eqref{4.4} with $\hat x = x^{k}$\;\label{algstep:dualupdate}
    $I_{D} \leftarrow \set{i: y^{k+1}_i\neq 0}$\;
    $J_{D} \leftarrow \set{j:\Abs{A^\top_j y^{k+1}}=1}$\;
    \BlankLine
    \tcp{Primal update:}
    $(x^{k+1}, t^{k+1}) \leftarrow$ sol. of problem \eqref{3.0}--\eqref{3.5} with $\hat y = y^{k+1}$ and $\hat \delta = \delta^{k}$\;\label{algstep:primalupdate}
    $\delta^{k+1} \leftarrow \delta^{k} - t^{k+1}$\;
    $I_{P} \leftarrow \set{i: \Abs{a_i^{\top}x^{k+1}-b_i}=\delta^{k+1}}$\;
    $J_{P} \leftarrow \set{j:x^{k+1}_j\neq 0}$\;
    \BlankLine
    $k \leftarrow k + 1$\;
 }
\BlankLine
\Return{$x^{*} = x^{k+1}$}
\caption{$\ell_1$-\textsc{Houdini}}
\label{alg:homotopy_iteration}
\end{algorithm}

To prove convergence of Algorithm~\ref{alg:homotopy_iteration} we start with a lemma:

\begin{lemma}
  \label{lemma:iterates}
  In each two consecutive iterations, Algorithm
  \ref{alg:homotopy_iteration} produces iterates $y^{k+1}\neq y^{k}$ and $x^{k+1}\neq x^{k}$. In particular, it holds that $t^{k+1}>0$ in each iteration.
\end{lemma}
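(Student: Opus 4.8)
The plan is to argue inductively, using Theorem~\ref{theorem:alternatives} as the engine, that at the start of every iteration the current pair $(x^k,y^k)$ admits a strict improvement in both the dual and then the primal problem, so that the solvers in steps~\ref{algstep:dualupdate} and~\ref{algstep:primalupdate} are forced to return genuinely new iterates. First I would establish the induction hypothesis: at the beginning of iteration $k$ we have an optimal pair $(x^k,y^k)$ for (P$_{\delta^k}$) with $\delta^k>\delta$. For the base case one has to be slightly careful, since the algorithm starts with $x^0=0$ but no explicit $y^0$; here I would observe that $x^0=0$ is optimal for (P$_{\delta^0}$) with $\delta^0=\norm{b}_\infty$, so $J_P^0=\emptyset$ and $I_P^0=\set{i:\abs{b_i}=\delta^0}$ as set in the initialization, and any dual certificate (e.g. $y=0$) works; I will only need the structure of the feasible set of \eqref{4.0}--\eqref{4.4}, not a particular $y^0$. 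The key point is that the problems \eqref{3.0}--\eqref{3.5} and \eqref{4.0}--\eqref{4.4} are exactly the primal/dual update systems, and by the construction in Sections~\ref{subsec:primalupdates}--\ref{subsec:dualupdates}, $x^k$ (resp. any valid dual certificate for $x^k$) is always feasible for them.

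Next, the core of the argument: I want to show that whenever $\delta^k>\delta$, alternative~\eqref{alt4} of Theorem~\ref{theorem:alternatives} holds, i.e.\ $\hat y$ is \emph{not} optimal for \eqref{4.0}--\eqref{4.4}. By the equivalences in the theorem, it suffices to show alternative~\eqref{alt3} fails, i.e.\ $(\hat x,0)$ is \emph{not} optimal for \eqref{3.0}--\eqref{3.5}; equivalently, I must exhibit a feasible $(x,t)$ with $t>0$, or invoke alternative~\eqref{alt1}/\eqref{alt2}. The cleanest route is to argue directly that $(x^k,0)$ cannot be optimal for \eqref{3.0}--\eqref{3.5} when $\hat\delta=\delta^k>\delta$: since $x^k$ solves (P$_{\delta^k}$) and $\delta^k<\norm{b}_\infty$, the optimal value of (P$_{\delta^k-t}$) is nondecreasing and in fact \emph{strictly} decreasing in the relevant regime (as $\delta$ decreases the feasible set shrinks and $x=0$ is infeasible), so there is a direction in which $\delta$ can still be reduced while keeping feasibility and sign/support constraints — precisely the content of system \eqref{2.1}--\eqref{2.4} being feasible, i.e.\ alternative~\eqref{alt2} is false, hence by the theorem alternative~\eqref{alt3} is false. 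Then the solver in step~\ref{algstep:primalupdate}, which maximizes $t$, returns $t^{k+1}>0$, giving $x^{k+1}\neq x^k$ and $\delta^{k+1}<\delta^k$. Running the same theorem once more shows that the dual update in step~\ref{algstep:dualupdate} of the \emph{next} iteration produces $y^{k+2}\neq y^{k+1}$: indeed alternative~\eqref{alt4} holding is exactly the statement that $y^{k+1}$ is not a minimizer of \eqref{4.0}--\eqref{4.4}, so a solver for that LP must return a different point. To close the induction I also need that $(x^{k+1},y^{k+1})$ is again an optimal pair for (P$_{\delta^{k+1}}$), which follows because \eqref{3.1}--\eqref{3.4} are precisely the optimality conditions \eqref{eq:oc_1}--\eqref{eq:oc_2} rewritten for the new $\delta$, and feasibility of $y^{k+1}$ for \eqref{4.0}--\eqref{4.4} encodes \eqref{eq:oc_1}.

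The main obstacle I anticipate is pinning down the strict decrease of the optimal value of (P$_{\delta^k-t}$), which is what ultimately forces $t^{k+1}>0$; one has to rule out the degenerate possibility that the optimal value stays flat over an interval of $\delta$'s even though $\delta^k>\delta$. I would handle this by noting that $\norm{\cdot}_1$ is not constant on the feasible sets in question (since $x=0$ is infeasible for all $\delta<\norm{b}_\infty$, the optimal value is strictly positive and, by a standard perturbation/LP-duality argument on \eqref{eq:p_deltaLP}, piecewise linear and strictly decreasing wherever the constraint $\norm{Ax-b}_\infty\le\delta$ is active at optimum), and translate this into feasibility of \eqref{2.1}--\eqref{2.4} so that Theorem~\ref{theorem:alternatives} can be applied. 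A secondary, more bookkeeping-level obstacle is the first iteration, where no $y^0$ exists: there I rely only on the fact that the feasible set of the dual update LP \eqref{4.0}--\eqref{4.4} for $\hat x=x^0=0$ is nonempty (it contains $y=0$) and that the theorem's hypotheses require only an optimal pair for (P$_{\hat\delta}$), which $(0,0)$ is for $\hat\delta=\norm{b}_\infty$.
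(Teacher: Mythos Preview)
Your plan has a sign error in the use of Theorem~\ref{theorem:alternatives} that unravels the whole argument. Alternatives~\eqref{alt3} and~\eqref{alt4} are \emph{equivalent}, not complementary: \eqref{alt3} says $(\hat x,0)$ \emph{is} optimal for \eqref{3.0}--\eqref{3.5}, and \eqref{alt4} says $\hat y$ is \emph{not} optimal for \eqref{4.0}--\eqref{4.4}. So to conclude that $y^k$ is not optimal for \eqref{4.0}--\eqref{4.4} (i.e., \eqref{alt4} holds with $\hat y=y^k$), you must show \eqref{alt3} \emph{holds}, namely that $(x^k,0)$ \emph{is} optimal for \eqref{3.0}--\eqref{3.5} with $\hat y=y^k$ at $\hat\delta=\delta^k$---exactly the opposite of what you set out to prove. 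Your monotonicity argument aims at the wrong target.

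Even setting the direction straight, the monotonicity route does not deliver what is needed. Strict decrease of the value function $\delta\mapsto\min\{\norm{x}_1:\norm{Ax-b}_\infty\le\delta\}$ only tells you that \emph{some} $x$ is feasible for $\norm{Ax-b}_\infty\le\delta^k-t$; it does not produce a $d$ satisfying \eqref{2.1}--\eqref{2.4}, because those constraints are tied to the particular index sets $I_D,J_D$ coming from a \emph{specific} dual certificate $\hat y$. The whole point of the dual update is to pick the right $\hat y$ so that \eqref{2.1}--\eqref{2.4} becomes feasible; an arbitrary certificate need not work.

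The paper's mechanism is different and purely internal. For $k\ge 1$, the previous primal step already maximized $t$ in \eqref{3.0}--\eqref{3.5} with $\hat y=y^k$ at $\hat\delta=\delta^{k-1}$, returning $(x^k,\delta^{k-1}-\delta^k)$. Shifting $\hat\delta$ to $\delta^k$ (and $t$ to $0$) shows that $(x^k,0)$ \emph{is} optimal for \eqref{3.0}--\eqref{3.5} with $\hat y=y^k$ at $\hat\delta=\delta^k$; hence \eqref{alt3} holds, hence \eqref{alt4} holds, hence $y^k$ is not optimal for \eqref{4.0}--\eqref{4.4} and $y^{k+1}\neq y^k$. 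Conversely, since $y^{k+1}$ \emph{is} by construction an optimal solution of \eqref{4.0}--\eqref{4.4}, alternative~\eqref{alt4} fails with $\hat y=y^{k+1}$, so \eqref{alt3} fails too, giving $t^{k+1}>0$ immediately---no appeal to value-function monotonicity is needed. The key idea you are missing is to exploit the \emph{maximality} of the previous primal step.
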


\begin{proof}
  In the beginning, we have $x^{0} = 0$ and determine $y^{1}$ solving \eqref{4.0}--\eqref{4.4} with $\hat x = x^{0}$. By Theorem \ref{theorem:alternatives}, $(x^{0}, 0)$ is not an optimal solution to \eqref{3.0}--\eqref{3.5} with $\hat y = y^{1}$ and $\hat \delta = \delta^{0}$. It follows that $x^{1}\neq x^{0}$ and $t^{1} > 0$ after solving \eqref{3.0}--\eqref{3.5}.

Now suppose $k\geq 1$ and consider an iteration of Algorithm \ref{alg:homotopy_iteration} starting from an optimal pair $(x^{k}, y^{k})$ for (P$_{\delta^{k}}$) which is known from the previous iteration. First, we determine a new dual iterate $y^{k+1}$ by solving \eqref{4.0}--\eqref{4.4} with $\hat x = x^{k}$. From the previous primal update we know that $(x^{k}, \delta^{k-1} - \delta^{k})$ is a solution of \eqref{3.0}--\eqref{3.5} with $\hat \delta = \delta^{k-1}$ and $\hat y = y^{k}$. It follows that $(x^{k}, 0)$ is a solution of \eqref{3.0}--\eqref{3.5} with $\hat \delta = \delta^{k}$ and $\hat y = y^{k}$. In turn, Theorem \ref{theorem:alternatives} states that $y^{k}$ is not a solution of \eqref{4.0}--\eqref{4.4} with $\hat x = x^{k}$. By construction, $y^{k+1}$ is a solution of \eqref{4.0}--\eqref{4.4} with $\hat x = x^{k}$ and consequently $y^{k+1}\neq y^{k}$. For the same reason, Theorem \ref{theorem:alternatives} states that $(x^{k}, 0)$ is (although feasible) not a solution of \eqref{3.0}--\eqref{3.5} with $\hat y = y^{k+1}$ and $\hat \delta = \delta^{k}$. Since $(x^{k+1}, t^{k+1})$ is exactly such a solution, it follows that $t^{k+1} > 0$ and $x^{k+1}\neq x^{k}$.\qed
\end{proof}

Certainly, Lemma \ref{lemma:iterates} does not yet prove convergence
of Algorithm \ref{alg:homotopy_iteration}. Nevertheless, we see that
each iteration contributes at least a small approach towards a
solution of (\ref{eq:p_delta}).

\begin{theorem}
  \label{theorem:convergence}
  Algorithm \ref{alg:homotopy_iteration} terminates after a finite
  number of iterations and returns an optimal solution of
  (\ref{eq:p_delta}).
\end{theorem}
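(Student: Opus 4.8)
The plan is to establish three facts: that the loop maintains the invariant that $(x^{k},y^{k})$ is an optimal pair for (P$_{\delta^{k}}$); that each pass makes strict progress; and that only finitely many passes occur. For the first, I would induct on $k$. The base case holds by the initialization (since $\delta^{0}=\norm{b}_{\infty}$, $x^{0}=0$ solves (P$_{\delta^{0}}$)). For the step, observe -- as essentially carried out in Subsections~\ref{subsec:primalupdates}--\ref{subsec:dualupdates} -- that \eqref{4.1}--\eqref{4.4} is exactly the linearization of the optimality system \eqref{eq:oc_1}--\eqref{eq:oc_2} with the primal variable frozen at $x^{k}$, so every feasible point of \eqref{4.0}--\eqref{4.4} is a dual certificate for $x^{k}$ and $(x^{k},y^{k+1})$ is again optimal for (P$_{\delta^{k}}$); likewise \eqref{3.1}--\eqref{3.5}, with $y^{k+1}$ frozen and $\delta^{k+1}\define\delta^{k}-t$, linearizes \eqref{eq:oc_1}--\eqref{eq:oc_2}, so any feasible $(x^{k+1},t^{k+1})$ makes $(x^{k+1},y^{k+1})$ optimal for (P$_{\delta^{k+1}}$). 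One must also note that both subproblems are solvable: each is feasible (via the trivial points $y^{k}$, resp.\ $(x^{k},0)$), and the primal one is bounded because of the cap $t\le\delta^{k}-\delta$ in \eqref{3.5}; for the dual one I would either invoke a mild standing assumption (e.g.\ full row rank of $A$, which also makes \eqref{eq:p_delta} feasible for every $\delta\ge0$) or argue directly that the pertinent dual-optimal set is bounded. Granting the invariant, the loop exits only when $\delta^{k}=\delta$, whereupon the returned $x^{k+1}$ is optimal for \eqref{eq:p_delta}.

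For the progress part, Lemma~\ref{lemma:iterates} gives $t^{k+1}>0$ in every iteration, so $\delta^{0}>\delta^{1}>\dots\ge\delta$ is strictly decreasing. Since every optimal solution of (P$_{\delta}$) lies on the boundary of its feasible set whenever $\delta<\norm{b}_{\infty}$, the value $v(\delta)\define\min\set{\norm{x}_{1}:\norm{Ax-b}_{\infty}\le\delta}$ is in fact strictly decreasing on $[0,\norm{b}_{\infty}]$, whence $\norm{x^{0}}_{1}<\norm{x^{1}}_{1}<\dots$ and no iterate can recur. Moreover, $(x^{k},0)$ and $(x^{k+1},t^{k+1})$ both lie in the polyhedron defined by \eqref{3.1}--\eqref{3.5}, so by convexity $x^{k}+(\delta^{k}-\delta)(x^{k+1}-x^{k})/t^{k+1}$ is optimal for (P$_{\delta}$) for all $\delta\in[\delta^{k+1},\delta^{k}]$; thus the algorithm traces a continuous, piecewise-linear primal solution path whose breakpoints all lie among the values $\delta^{k}$.

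The heart of the proof is finiteness -- ruling out an infinite run whose step sizes shrink and stall at some $\delta^{*}>\delta$. The decisive ingredient is that the whole solution path has only finitely many breakpoints: casting (P$_{\delta}$) as the linear program \eqref{eq:p_deltaLP}, the right-hand side is affine in $\delta$ while the cost vector and constraint matrix are fixed, so by standard parametric linear programming the $\delta$-axis decomposes into finitely many intervals on each of which a single basis stays optimal and $v$ is affine, the (finitely many) interval endpoints being exactly the breakpoints. It then remains to see that each $\delta^{k+1}$ the method produces is either the target $\delta$ (when \eqref{3.5} is the binding constraint) or one of these breakpoints: since the primal update maximizes $t$, in the remaining case some constraint among \eqref{3.1}--\eqref{3.3} that was slack at $(x^{k},0)$ must turn tight at $(x^{k+1},t^{k+1})$, i.e.\ the active/support index sets change at $\delta^{k+1}$ and the path genuinely bends there. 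A strictly decreasing sequence taking values in a finite set (the breakpoints together with $\delta$) is finite, so after finitely many iterations $\delta^{k+1}=\delta$ and the loop stops.

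The step I expect to be most delicate is precisely this last link -- tying ``the maximal $t$ in \eqref{3.0}--\eqref{3.5} is exhausted'' to ``$\delta^{k+1}$ is a genuine breakpoint of the \emph{unrestricted} solution path'' -- because \eqref{3.3}--\eqref{3.4} impose extra $y^{k+1}$-dependent restrictions that could conceivably be what blocks further progress rather than a constraint intrinsic to (P$_{\delta}$). What keeps this honest is Theorem~\ref{theorem:alternatives}: it guarantees that the dual update selects $y^{k+1}$ so that the ensuing primal step is never a zero step and advances all the way to the next point at which (P$_{\delta}$) itself forces a structural change, so the restricted subproblem never obstructs progress that the underlying path would otherwise permit -- which is exactly what keeps the restricted and unrestricted solution paths aligned.
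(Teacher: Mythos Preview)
Your route through parametric linear programming is different from the paper's, and the gap is exactly where you flag it. Theorem~\ref{theorem:alternatives} (via Lemma~\ref{lemma:iterates}) only guarantees $t^{k+1}>0$; it does not say the primal step ``advances all the way to the next point at which (P$_\delta$) itself forces a structural change.'' Your local argument---that some constraint among \eqref{3.1}--\eqref{3.3} turns tight, so the active/support sets change---establishes a structural change in the \emph{restricted} subproblem, not that $\delta^{k+1}$ is a kink of the optimal-value function $v$. A priori a poor choice of dual certificate $y^{k+1}$ could cause the restricted problem to stop at a $\delta$-value where $v$ is perfectly linear. What actually makes your claim true (but which you do not argue) is that on the feasible set of \eqref{4.0}--\eqref{4.4} one has, by \eqref{4.3}--\eqref{4.4}, $-\sign(Ax^k-b)^\top y=-\norm{y}_1$, so the dual update in fact \emph{maximizes} $\norm{y}_1$ over all dual optima at $\delta^k$; since $\partial v(\delta^k)=[-\max\norm{y^*}_1,\,-\min\norm{y^*}_1]$, the selected $y^{k+1}$ realizes the left derivative $v'_-(\delta^k)$ and therefore remains dual-optimal exactly on the linear piece of $v$ immediately to the left of $\delta^k$, whence the primal step does land on the next kink. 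This closes the gap, but it is not a consequence of Theorem~\ref{theorem:alternatives}.

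The paper's proof avoids all of this with a short combinatorial argument. The quadruple $\big(J_P^k,\,I_P^k,\,\sign(x^k_{J_P}),\,\sign(Ax^k-b)\big)$ ranges over a finite set and completely determines the dual subproblem \eqref{4.0}--\eqref{4.4}, hence determines $y^{k+1}$; after the substitution $\tilde\delta=\hat\delta-t$ it also determines the primal subproblem \eqref{3.0}--\eqref{3.5} independently of $\hat\delta$, hence determines $\delta^{k+1}$. Thus a repetition of the combinatorial state at iterations $k<\ell$ would force $\delta^{k+1}=\delta^{\ell+1}$, contradicting the strict decrease from Lemma~\ref{lemma:iterates}. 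No appeal to parametric LP theory is needed, and the argument is indifferent to which particular optima the subproblem solver returns (provided it does so deterministically).
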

\begin{proof}
  The number of possible support sets $J_{P}$, active sets $I_{P}$,
  associated sign patterns and combinations thereof is finite. Suppose
  that for $k<\ell$ Algorithm~\ref{alg:homotopy_iteration} produces
  $J_{P}:=J_{P}^{k} = J_{P}^{\ell}$, $I_{P}:=I_{P}^{k} =
  I_{P}^{\ell}$, $\sign(x_{J_{P}}^{k}) = \sign(x_{J_{P}}^{\ell})$ and
  $\sign(Ax^{k} - b) = \sign(Ax^{\ell} - b)$. According to \eqref{4.0}--\eqref{4.4}
  we obtain that also $y^{k+1} = y^{\ell +1}$. It follows that the primal
  update steps \eqref{3.0}--\eqref{3.5} to find $x^{k+1}$ and $x^{\ell+1}$ are
  equal except that we have $\hat \delta = \delta^{k}$ in the
  first case and $\hat \delta = \delta^{\ell}$ in the second,
  where $\delta^{k} > \delta^{\ell}$ by Lemma \ref{lemma:iterates}. Since $\hat \delta$ is a constant, it is equivalent to rewrite \eqref{3.0} as $t - \hat \delta$. The substitution $\tilde \delta := \hat \delta - t$ in \eqref{3.0}--\eqref{3.2} and \eqref{3.5} then reveals that the update problems for $x^{k+1}$ and $x^{\ell + 1}$ indeed have an identical reformulation. Hence, we obtain the same optimal value for $\tilde \delta$ in both cases, which shows that $\delta^{k+1} = \delta^{\ell + 1}$ and contradicts Lemma
  \ref{lemma:iterates} since $k<\ell$. Thus, Algorithm
  \ref{alg:homotopy_iteration} terminates after a finite number of
  iterations with an optimal solution.\qed
\end{proof}

\section{Practical Considerations}
\label{sec:practicalconsiderations}

As mentioned earlier, one may in principle use an arbitrary LP solver to tackle the update problems in $\ell_1$-\textsc{Houdini}. However, due to their special structure, we found active-set strategies to be particularly efficient for these LPs. In the following, we give the details of our approach; the numerical experiments in Section~\ref{sec:applicationsandexamples} will later demonstrate the efficiency of our corresponding implementation.

\subsection{Active-Set Method for the Primal Update}
\label{sec:activesetprimal}

Finding a new primal iterate $x^{k+1}$ and the related decrease $t^{k+1}$ of the homotopy parameter in Step \ref{algstep:primalupdate} of Algorithm \ref{alg:homotopy_iteration} gives rise to the linear program
\begin{subequations}
  \begin{alignat}{5}
    \max_{(x_{J_{D}},t) \in \R^{\abs{J_{D}}}\times\R} &&\begin{pmatrix}0\\1\end{pmatrix}^{\top} &\begin{pmatrix}x_{J_{D}}\\t\end{pmatrix}\label{5.0}\\
    \st &&\quad \begin{bmatrix}A^{I_{D}}_{J_{D}} &\sign(y^{k+1}_{I_{D}})\end{bmatrix} &\begin{pmatrix}x_{J_{D}}\\t\end{pmatrix} &\quad = &\quad \delta^{k}\sign(y_{I_{D}}^{k+1}) + b_{I_{D}}\label{5.1}\\
    &&\quad \begin{bmatrix}A^{I_{D}^{c}}_{J_{D}} &\One\\ -A^{I_{D}^{c}}_{J_{D}} &\One\\ 0 &1\end{bmatrix} &\begin{pmatrix}x_{J_{D}}\\t\end{pmatrix} &\quad \leq &\quad \begin{pmatrix}\delta^{k}\One + b_{I_{D}^{c}}\\\delta^{k}\One - b_{I_{D}^{c}}\\\delta^{k} - \delta\end{pmatrix}\label{5.2}\\
    &&\begin{bmatrix}\diag(A^{\top}_{J_{D}}y^{k+1}) &0\\ 0 &-1\end{bmatrix} &\begin{pmatrix}x_{J_{D}}\\t\end{pmatrix} &\quad \leq &\quad 0\label{5.3}.
  \end{alignat}
  \label{eq:primalactivesetproblem}%
\end{subequations}
In this section, we introduce an active-set method in order to solve problem \eqref{5.0}-\eqref{5.3}. The idea for our approach bases upon the active-set method for quadratic programs illustrated, e.g., in \cite{NocedalWright2006}. We adapt the method to the special type of linear programs that we are faced with. We refer to Appendix \ref{sec:lp_active_set_method} for the general procedure and to Table \ref{tab:primalupdate} for supplementary details about the implementation of (\ref{eq:primalactivesetproblem}).

\subsubsection{Initialization}
\label{sec:activesetprimalinitialization}

We observe that the point $(x^{k}_{J_{D}}, 0)$ is feasible since $(x^{k}, y^{k+1})$ is an optimal pair for P$_{\delta^{k}}$. We set $\ell = 0$ and choose our starting point $(\xi^{\ell}_{J_{D}}, \tau^{\ell}) = (x^{k}_{J_{D}}, 0)$ accordingly. Regarding \eqref{5.2}, we see that the subset of active constraints at the starting point $(x^{k}_{J_{D}}, 0)$ corresponds to ${\cal A} = I_{P}\setminus I_{D}$ with either positive or negative sign. The initial support is exactly ${\cal S} = J_{P}$.

The variable $t$ represents the decrease of the homotopy parameter starting from $\delta^{k}$. Although the associated iterate is initially zero, $t$ joins the support once we have performed a step towards an ascent direction. Since each constructed direction is an ascent direction, $t$ does not leave the support afterwards. Consequently, we have ${\cal S} = J_{P} \cup \set{t}$.

The constraint $t\leq \delta^{k} - \delta$ is neither active in the beginning nor will it be so unless we have found an optimal solution of our original problem (\ref{eq:p_delta}).

\subsubsection{Ascent Directions and Blocking Constraints}
\label{sec:activesetprimaldescentdirectionsandblockingconstraints}

In order to find an ascent direction preserving ${\cal A}$ and ${\cal S}$, we fix $d_{J_{D}\setminus J_{P}} = 0$ and $d_{t} = 1$ and seek for a solution of the linear system
\begin{equation}
  \label{eq:primalactivesetdescentdirection}
  A_{J_{P}}^{I_{P}}d_{J_{P}} = -\sign(A^{I_{P}}\xi^{\ell} - b_{I_{P}}).
\end{equation}
If a solution of (\ref{eq:primalactivesetdescentdirection}) exists, the largest step size $\alpha$ preserving feasibility is
\begin{equation}
  \label{eq:primalalpha}
  \alpha = \min\set{\alpha_{{\cal A}},\, \alpha_{{\cal S}},\, \delta^{k} - \tau^{\ell} - \delta},
\end{equation}
wherein
\begin{equation}
  \label{eq:primalactivesetalphacon}
  \alpha_{{\cal A}} = \min \left\{\min_{i\in I_{P}^{c}\atop a_{i}^{\top}d>-1} \frac{\delta^{k} - \tau^{\ell} - a_{i}^{\top}\xi^{\ell} + b_{i}}{a_{i}^{\top}d + 1}, \min_{i\in I_{P}^{c}\atop a_{i}^{\top}d<1} \frac{\delta^{k} - \tau^{\ell} + a_{i}^{\top}\xi^{\ell} - b_{i}}{-a_{i}^{\top}d + 1}\right\}
\end{equation}
and
\begin{equation}
  \label{eq:primalactivesetalphasup}
  \alpha_{{\cal S}} = \min_{j\in J_{P}\atop A_{j}^{\top}y^{k+1}\cdot d_{j} > 0} -\frac{\xi^{\ell}_{j}}{d_{j}}.
\end{equation}
The new iterates are then
\begin{equation}
  \xi^{\ell + 1} = \xi^{\ell} + \alpha d\qquad \text{and} \qquad \tau^{\ell + 1} = \tau^{\ell} + \alpha.
\end{equation}
In case $\alpha = \delta^{k} - \tau^{\ell} - \delta$, we stop thereafter since $x^{*} = \xi^{\ell + 1}$ is an optimal solution of (\ref{eq:p_delta}). Otherwise, we finally update
\begin{equation}
  \label{eq:primalactivesetblockingconstraints}
  \begin{aligned}
    I_{P} &= I_{P} \cup \set{i\in I_{P}^{c} : \abs{A^{i}\xi^{\ell + 1} - b_{i}} = \delta^{k}- \tau^{\ell + 1}}\\
    J_{P} &= J_{P} \setminus \set{j\in J_{P} : \abs{\xi^{\ell+1}} = 0}
  \end{aligned}
\end{equation}
which corresponds to an update of ${\cal A} = I_{P}\setminus I_{D}$ and ${\cal S} = J_{P}$.

\subsubsection{Lagrange Multipliers}
\label{sec:primalactivesetlagrangemultipliers}

If a solution of (\ref{eq:primalactivesetdescentdirection}) does not exist, zero is an optimal solution of
\begin{equation*}
    \max_{(d_{J_{P}}, d_{t}) \in \R^{\abs{J_{P}}}\times \R} \ \begin{pmatrix}0\\1\end{pmatrix}^{\top} \begin{pmatrix}d_{J_{P}}\\d_{t}\end{pmatrix} \quad \mathrm{s.t.} \ \begin{bmatrix}A^{I_{P}}_{J_{P}} &\sign(A^{I_{P}}\xi^{\ell} - b_{I_{P}})\end{bmatrix} \begin{pmatrix}d_{J_{P}}\\ d_{t}\end{pmatrix} = 0
\end{equation*}
and the associated KKT conditions show that there exists $\hat e_{I_{P}}$ satisfying
\begin{equation}
  \label{eq:primalactivesetmultipliers1}
  \begin{aligned}
    (A^{I_{P}}_{J_{P}})^{\top} \hat e_{I_{P}} &= 0\\
    \sign(A^{I_{P}}\xi^{\ell} - b_{I_{P}})^{\top} \hat e_{I_{P}} &= 1.
  \end{aligned}
\end{equation}
Building on that, we set
\begin{align}
  \label{eq:primalactivesetmultipliers2}
  \mu_{I_{P}\setminus I_{D}} &= \sign(A^{I_{P}\setminus I_{D}}\xi^{\ell} - b_{I_{P}\setminus I_{D}})\odot \hat e_{I_{P}\setminus I_{D}}\\
  \label{eq:primalactivesetmultipliers3}
  \nu_{J_{D}\setminus J_{P}} &= -(A_{J_{P}\setminus J_{D}}^{\top}y^{k+1}) \odot (A_{J_{D}\setminus J_{P}}^{I_{P}})^{\top}\hat e_{I_{P}}.
\end{align}
We can consider $\mu_{I_{P}\setminus I_{D}}$ and $\nu_{J_{D}\setminus J_{P}}$ as Lagrange multipliers associated with the KKT conditions for (\ref{eq:primalactivesetproblem}). In particular, $\mu_{I_{P}\setminus I_{D}}$ corresponds to the set ${\cal A}$ of active constraints in (\ref{5.2}) and $\nu_{J_{D}\setminus J_{P}}$ to ${\cal S}^{c}$, i.e., the active constraints in (\ref{5.3}). Although differently motivated, the multipliers (\ref{eq:primalactivesetmultipliers2}) and (\ref{eq:primalactivesetmultipliers3}) are exactly what we get if we determine $\mu_{{\cal A}}$ and $\nu_{{\cal S}^{c}}$ according to Appendix \ref{sec:lagrange_multipliers}.

In case $\mu_{I_{P}\setminus I_{D}}\geq 0$ and $\nu_{J_{D}\setminus J_{P}}\geq 0$, the current iterate $\xi_{J_{D}}^{\ell}$ is optimal. Else, we pick $i\in I_{P}\setminus I_{D}$ with $\mu_{i} < 0$ or $j\in J_{D}\setminus J_{P}$ with $\nu_{j} < 0$ and update $I_{P} = I_{P}\setminus\set{i}$ or $J_{P} = J_{P} \cup \set{j}$, respectively. This corresponds to an update of ${\cal A}$ and ${\cal S}$, respectively. 

\subsection{Active-Set Method for the Dual Update}
\label{sec:activesetdual}

Finding a new dual iterate $y^{k+1}$ in Step \ref{algstep:dualupdate} of Algorithm \ref{alg:homotopy_iteration} gives rise to to the linear program
\begin{subequations}
  \begin{alignat}{5}
    \min_{y_{I_{P}} \in \R^{\abs{I_{P}}}} &&\quad -\sign(A^{I_{P}}x^{k} - b_{I_{P}})^{\top} &y_{I_{P}}\label{8.0}\\
    \st &&\quad (-A^{I_{P}}_{J_{P}})^{\top} &y_{I_{P}} &\quad = &\quad \sign(x^{k}_{J_{P}})\label{8.1}\\
    &&\quad \begin{bmatrix}(A^{I_{P}}_{J_{P}^{c}})^{\top}\\ (-A^{I_{P}}_{J_{P}^{c}})^{\top}\end{bmatrix} &y_{I_{P}} &\quad \geq &\quad -\One\label{8.2}\\
    &&\diag(\sign(A^{I_{P}}x^{k} - b_{I_{P}})) &y_{I_{P}} &\quad \geq &\quad 0\label{8.3}.
  \end{alignat}
  \label{eq:dualactivesetproblem}%
\end{subequations}
Analogous to the primal case, we use our results from Appendix \ref{sec:lp_active_set_method} to develop an active-set method for problem \eqref{8.0}--\eqref{8.3}. We refer to Table \ref{tab:dualupdate} for additional information on the implementation of (\ref{eq:dualactivesetproblem}).

\subsubsection{Initialization}
\label{sec:dualactivesetinitialization}

In the beginning, $y^{k}_{I_{P}}$ is feasible since $(x^{k}, y^{k})$ is an optimal pair. We set $\ell = 0$ and choose $\psi^{\ell}_{I_{P}} = y^{k}_{I_{P}}$ as our starting point. In view of \eqref{8.2}, the set of active constraints at $y^{k}_{I_{P}}$ corresponds to ${\cal A} = J_{D}\setminus J_{P}$ with either positive or negative sign and the initial support is ${\cal S} = I_{D}$.

\subsubsection{Descent Direction and Blocking Constraints}
\label{sec:dualactivesetsubproblem}

We seek for a descent direction preserving ${\cal A}$ and ${\cal S}$ by solving
\begin{equation}
  \label{eq:dualactivesetdirection}
  \begin{aligned}
    (A^{I_{D}}_{J_{D}})^{\top} e_{I_{D}} &= 0\\
    \sign(A^{I_{D}}x^{k} - b_{I_{D}})^{\top} e_{I_{D}} &=1.
  \end{aligned}
\end{equation}
If such a direction exists, the largest step size preserving feasibility is
\begin{equation}
  \label{eq:dualalpha}
  \alpha = \min\set{\alpha_{{\cal A}},\, \alpha_{{\cal S}}}.
\end{equation}
Here,
\begin{equation}
  \label{eq:dualactivesetalphacon}
  \alpha_{{\cal A}} = \min \left\{ \min_{j\in J_{D}^{c}\atop A_{j}^{\top}e < 0} \frac{1 + A_{j}^{\top}\psi^{\ell}}{-A_{j}^{\top}e}, \min_{j\in J_{D}^{c}\atop A_{j}^{\top}e > 0} \frac{1 - A_{j}^{\top}\psi^{\ell}}{A_{j}^{\top}e}\right\}
\end{equation}
and
\begin{equation}
  \label{eq:dualactivesetalphasup}
  \alpha_{{\cal S}} = \min_{i\in I_{D}\atop \sign(a_{i}^{\top}x^{k} - b_{i})e_{i} < 0} -\frac{\psi^{\ell}_{i}}{e_{i}}.
\end{equation}
The new iterate is $\psi^{\ell + 1} = \psi^{\ell} + \alpha e$. Finally, we need to update
\begin{equation}
  \label{eq:dualactivesetblockingconstraints}
  \begin{aligned}
    I_{D} &= I_{D} \setminus \set{i\in I_{D} : \psi^{\ell + 1} = 0}\\
    J_{D} &= J_{D} \cup \set{j\in J_{D}^{c} : \abs{A_{j}^{\top}\psi^{\ell + 1}} = 1}
  \end{aligned}
\end{equation}
which corresponds to an upate of ${\cal A} = J_{D}\setminus J_{P}$ and ${\cal S} = I_{D}$.

\subsubsection{Lagrange Multipliers}
\label{sec:dualactivesetlagrangemultipliers}

If a solution of (\ref{eq:dualactivesetdirection}) does not exist, then zero is an optimal solution of
\begin{equation*}
  \min_{e_{I_{D}}\in\R^{\abs{I_{D}}}} \ -\sign(A^{I_{D}}x^{k} - b_{I_{D}})^{\top}e_{I_{D}} \quad \mathrm{s.t.} \ (A^{I_{D}}_{J_{D}})^{\top}e_{I_{D}} = 0.
\end{equation*}
Analogous to above, KKT conditions ensure that there exists $\hat d_{J_{D}}$ such that
\begin{equation}
  \label{eq:dualactivesetmultipliers1}
  A_{J_{D}}^{I_{D}} \hat d_{J_{D}} = -\sign(A^{I_{D}}x^{k} - b_{I_{D}})
\end{equation}
and we obtain Lagrange multipliers for (\ref{eq:dualactivesetproblem}) by setting 
\begin{align}
  \mu_{J_{D}\setminus J_{P}} &= -(A_{J_{D}\setminus J_{P}}^{\top}\psi^{\ell})\odot\hat d_{J_{D}\setminus J_{P}}\label{eq:dualactivesetmultipliers2}\\
  \nu_{I_{P}\setminus I_{D}} &= -\sign(A^{I_{P}\setminus I_{D}}x^{k} - b_{I_{P}\setminus I_{D}}) \odot A^{I_{P}\setminus I_{D}}_{J_{D}}\hat d_{J_{D}} - \One.\label{eq:dualactivesetmultipliers3}
\end{align}
Here, $\mu_{J_{D}\setminus J_{P}}$ corresponds to the set ${\cal A}$ of active constraints in (\ref{8.2}) and $\nu_{I_{P}\setminus I_{D}}$ correpsonds to ${\cal S}^{c}$, i.e., the set of active constraints in (\ref{8.3}). These multipliers are equal to those we obtain according to Appendix \ref{sec:lagrange_multipliers}.

In case $\mu_{J_{D}\setminus J_{P}} \geq 0$ and $\nu_{I_{P}\setminus I_{D}} \geq 0$, the current iterate $\psi_{I_{P}}^{\ell}$ is optimal. Otherwise, we can find $j\in J_{D}\setminus J_{P}$ with $\mu_{j} < 0$ or $i\in I_{P}\setminus I_{D}$ with $\nu_{i} < 0$ and update $J_{D} = J_{D} \setminus \set{j}$ or $I_{D} = I_{D} \setminus \set{i}$, repsectively. 

\subsection{Links Between Primal and Dual Active-Set Method}
\label{sec:linksbetweenprimalanddualactivesetmethod}

In the following, we establish a close connection between the methods discussed in Sections \ref{sec:activesetprimal} and \ref{sec:activesetdual}. This natural link will turn out to be enormously useful in terms of computational efficiency.
\medskip

In the context of Section \ref{sec:primalactivesetlagrangemultipliers}, suppose that we have found $\hat e_{I_{P}}$ satisfying equations (\ref{eq:primalactivesetmultipliers1}) such that the associated Lagrange multipliers $\mu_{I_{P}\setminus I_{D}}$ and $\nu_{J_{D}\setminus J_{P}}$ are throughout non-negative. In that situation, we have found an optimal solution of the primal subproblem \eqref{eq:primalactivesetproblem} and proceed to the dual subproblem \eqref{eq:dualactivesetproblem}. Therein, we would first attempt to find a direction $e_{I_{D}}$ satisfying (\ref{eq:dualactivesetdirection}). Can this ever be successful?
\medskip

Let us recall the situation at the end of the previous dual update. In fact, we did not find a direction satisfying (\ref{eq:dualactivesetdirection}) and afterwards found that our current iterate was already optimal. Since then, the sets $I_{D}$ and $J_{D}$ did not change. Hence, it would be pointless to search a solution of (\ref{eq:dualactivesetdirection}) as a first step of the active-set method for the dual update.
\medskip

As we have argued so far, we would continue by adapting the sets $I_{D}$ and $J_{D}$ invoking Lagrange multipliers according to (\ref{eq:dualactivesetmultipliers1})--(\ref{eq:dualactivesetmultipliers3}). But there is a remedy. A comparison of what we have and what we seek for, $\hat e_{I_{P}}$ and $e_{I_{D}}$, respectively, reveals the follwing:
\begin{alignat*}{5}
  (A_{J_{P}}^{I_{P}})^{\top}\hat e_{I_{P}} &= 0 \qquad \qquad &(A_{J_{D}}^{I_{D}})^{\top}e_{I_{D}} &= 0\\
  \sign(A^{I_{P}}x^{k} - b_{I_{P}})^{\top}\hat e_{I_{P}} &= 1 \qquad \qquad &\sign(A^{I_{D}}x^{k} - b_{I_{D}})^{\top}e_{I_{D}} &= 1.
\end{alignat*}
The crucial idea is now to perform the updates
\begin{equation}
  \label{eq:dualactivesetaprioriupdate}
  \begin{aligned}
    I_{D} &= I_{D} \cup \{i\in I_{P}\setminus I_{D} : \hat e_{i} \neq 0\}\\
    J_{D} &= J_{D} \setminus \{j\in J_{D}\setminus J_{P} : (A_{j}^{I_{P}})^{\top}\hat e_{I_{P}} \neq 0\}.
  \end{aligned}
\end{equation}
After that, $e_{I_{D}} = \hat e_{I_{D}}$ will do exactly what we need.
\medskip

The fact that the Lagrange multipliers associated with $\hat e_{I_{P}}$ are non-negative throughout shows that a non-trivial step $y^{k} + \alpha\hat e$ maintains primal-dual optimality. For $i \in I_{P}\setminus I_{D}$ with $\hat e_{i} \neq 0$, it holds that $\sign(a_{i}^{\top}x^{k} - b_{i})\hat e_{i} > 0$, which shows that a step in direction $\hat e$ provides the dual variable with the desired sign. Further, it holds for $j\in J_{D}\setminus J_{P}$ with $A_{j}^{\top}\hat e \neq 0$ that $A_{j}^{\top}y^{k} \cdot A_{j}^{\top}\hat e < 0$, which shows that a step in direction $\hat e$ forces the respective dual constraint to become inactive while maintaining feasibility.
\medskip

It is not at all surprising that an analogous approach works in the beginning of the primal update. Suppose that we have $\hat d_{J_{D}}$ according to \eqref{eq:dualactivesetmultipliers1} at hand and the associated Lagrange multipliers are non-negative. We compare $\hat d_{J_{D}}$ to the sought after direction $d_{J_{P}}$:
\begin{equation*}
  A_{J_{D}}^{I_{D}}\hat d_{J_{D}} = -\sign(A^{I_{D}}x^{k}-b_{I_{D}}) \quad \qquad A_{J_{P}}^{I_{P}}d_{J_{P}} = -\sign(A^{I_{P}}x^{k} - b_{I_{P}}).
\end{equation*}
Analogous to above, we perform the update
\begin{equation}
  \label{eq:primalactivesetaprioriupdate}
  \begin{aligned}
    J_{P} &= J_{P} \cup \{j\in J_{D}\setminus J_{P} : \hat d_{j} \neq 0\}\\
    I_{P} &= I_{P} \setminus \{i\in I_{P}\setminus I_{D} : a_{i}^{\top}\hat d\neq -\sign(a_{i}^{\top}x^{k} - b_{i})\},
  \end{aligned}
\end{equation}
whereafter $d_{J_{P}} = \hat d_{J_{P}}$ does the job.
\medskip

By non-negativity of the Lagrange multipliers associated with $\hat d_{J_{D}}$, it can be shown that a non-trivial step $x^{k} + \alpha \hat d$ maintains primal-dual optimality: For $j\in J_{D}\setminus J_{P}$ with $\hat d_{j}\neq 0$ it holds that $-A_{j}^{\top}y^{k}\cdot\hat d_{j} > 0$. Further, each $i\in I_{P}\setminus I_{D}$ with $a_{i}^{\top}\hat d\neq -\sign(a_{i}^{\top}x^{k} - b_{i})$ satisfies $a_{i}^{\top}\hat d\cdot\sign(a_{i}^{\top}x^{k} - b_{i}) < -1$.

\section{Applications and Examples}
\label{sec:applicationsandexamples}

Before we come to a numerical evaluation of the algorithm, a typical run of $\ell_{1}$-\textsc{Houdini} on a small problem is shown in Figure~\ref{fig:solution_path}.
We observe that the solution path does not need to show any particular monotonicity; other examples exhibit even more tangled solution paths with multiple variables entering or leaving the support or dense clusters of break points of $\delta^{k}$ at various values.

\begin{figure}
  \centering
  \includegraphics[scale = .65]{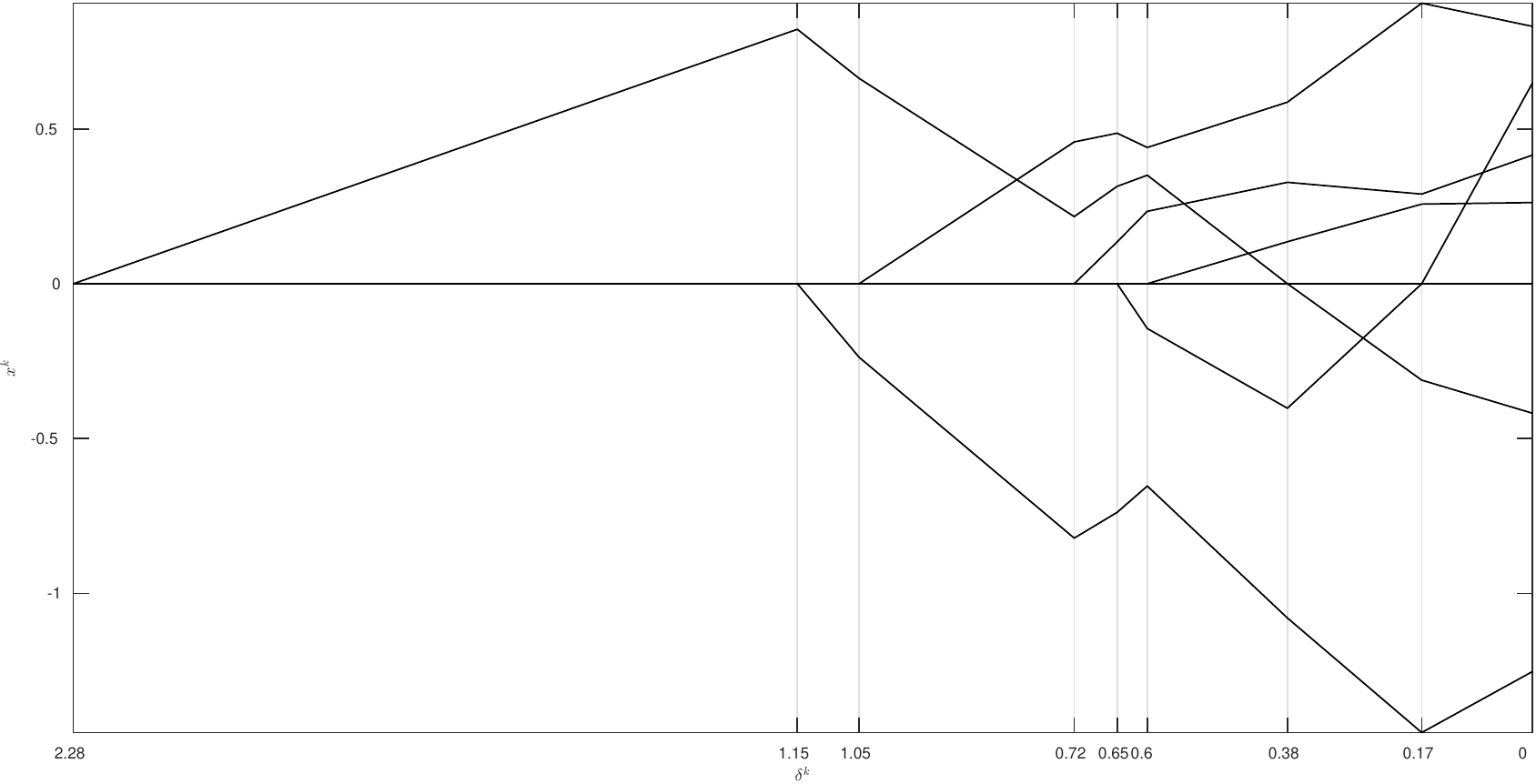}
  \caption{Examplary run of $\ell_{1}$-\textsc{Houdini} (using active set) with $A\in\R^{6\times 12}$ and $b\in\R^{6}$ randomly generated and $\delta = 0$. The algorithm needed 9 iterations to solve the problem. Horizontal labels display the value of the homotopy parameter $\delta^{k}$ after each iteration. The plots represent the solution paths of $x^{k}_{j}$ for $j = 1,\dots,12$. The optimal solution has $6$ nonzero entries.}
  \label{fig:solution_path}
\end{figure}

We compare our homotopy
method for \eqref{eq:p_delta} with the state-of-the-art commercial LP solver Gurobi applied to the LP reformulation
\[
  \min\,\mathds{1}^\top x^++\mathds{1}^\top x^-\quad\text{s.t.}\quad -\delta\cdot\mathds{1}\leq Ax^+-Ax^--b\leq\delta\cdot\mathds{1},\,x^+\geq 0,\,x^-\geq 0
\]
(note that this formulation is equivalent to the one stated in
Section~\ref{subsec:PSM}, which contains slack variables).  We
experiment with two variants of our $\ell_1$-\textsc{Houdini} algorithm: In
one, we use the specialized active-set methods described in
Section~\ref{sec:practicalconsiderations}, and in
the other, we employ the same LP solver for our primal and dual update
subproblems that we compare against, i.e., with which we solve the
above LP reformulation of~\eqref{eq:p_delta}.

Our $\ell_1$-\textsc{Houdini} is implemented in \textsc{Matlab}. From the same wrapper
code to read instance data, we call either $\ell_1$-\textsc{Houdini} to solve
for the entire homotopy path, or call Gurobi (via its \textsc{Matlab} interface). 

The test instances are constructed from the ``L1-Testset'' described
in~\cite{LorenzPfetschTillmann2015}. This test set (available online
via the last author's or the SPEAR project homepage) contains over 500
instances $A$, $\bar{x}$ and $b=A\bar{x}$ for the pure Basis Pursuit
problem~\eqref{eq:bp} such that $\bar{x}$ is the known unique optimal
solution; all solutions in the test set are relatively sparse and have
varying dynamic ranges. Based on the following result, we can (for a
given $\delta$) construct new vectors $\hat b$ such that $\bar{x}$ is
optimal for the instance of~\eqref{eq:p_delta} specified by $A$, $\hat b$
and $\delta$.
\begin{lemma}\label{lem:constructPdeltaSol}
  Let $\bar{x}$ be an optimal solution of~\eqref{eq:bp} with given $A$
  and $b=A\bar{x}$. Then, $\bar{x}$ is an optimal solution
  of~\eqref{eq:p_delta} with the same $A$ and a measurement vector
  $b=\hat{b}$ if and only if there exists $\bar{y}$ such that 
  \[
  -A^\top \bar{y}\in\Sign(\bar{x})\quad\text{and}\quad\hat{b}\in A\bar{x}-\delta\Sign(\bar{y}).
  \]
\end{lemma}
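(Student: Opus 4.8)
The plan is to read the lemma as a direct specialization of the optimality conditions \eqref{eq:oc_1}--\eqref{eq:oc_2}. Recall that $x^{*}$ solves \eqref{eq:p_delta} for data $(A,b,\delta)$ if and only if there is a $y^{*}$ with $-A^{\top}y^{*}\in\Sign(x^{*})$ and $Ax^{*}-b\in\delta\Sign(y^{*})$. Applying this with $x^{*}=\bar{x}$ and the measurement vector $b=\hat{b}$, the claim follows once I observe that the inclusion $A\bar{x}-\hat{b}\in\delta\Sign(\bar{y})$ can be rewritten equivalently as $\hat{b}\in A\bar{x}-\delta\Sign(\bar{y})$: writing $A\bar{x}-\hat{b}=\delta\xi$ with $\xi\in\Sign(\bar{y})$ is literally the same as $\hat{b}=A\bar{x}-\delta\xi$ with $\xi\in\Sign(\bar{y})$.

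Concretely, for the ``only if'' direction I would invoke \eqref{eq:oc_1}--\eqref{eq:oc_2} for \eqref{eq:p_delta} with $b$ replaced by $\hat{b}$: optimality of $\bar{x}$ produces a dual certificate $\bar{y}$ satisfying exactly these two relations, and the rearrangement above casts the second one into the stated form. For the ``if'' direction, given $\bar{y}$ with $-A^{\top}\bar{y}\in\Sign(\bar{x})$ and $\hat{b}\in A\bar{x}-\delta\Sign(\bar{y})$, I would first note that the second condition means $A\bar{x}-\hat{b}=\delta\xi$ for some $\xi\in[-1,1]^{m}$, whence $\norm{A\bar{x}-\hat{b}}_{\infty}\leq\delta$ and $\bar{x}$ is feasible for \eqref{eq:p_delta} with measurement vector $\hat{b}$; then $(\bar{x},\bar{y})$ fulfils \eqref{eq:oc_1}--\eqref{eq:oc_2}, so $\bar{x}$ is optimal.

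Finally, I would add a brief remark on the hypothesis and on why there is no real obstacle here. The assumption that $\bar{x}$ is optimal for \eqref{eq:bp} with $b=A\bar{x}$ is not strictly used in the equivalence itself, but it makes the construction non-vacuous and explains its purpose: a dual certificate for \eqref{eq:bp} is precisely a vector $\bar{y}$ with $-A^{\top}\bar{y}\in\Sign(\bar{x})$, so one may take any such $\bar{y}$ and set, say, $\hat{b}=A\bar{x}-\delta\,\xi$ for a fixed selection $\xi\in\Sign(\bar{y})$ --- recovering $b$ itself in the limit $\delta=0$. Beyond this bookkeeping, the proof contains nothing more than a substitution and a sign rearrangement, so I do not anticipate any genuine difficulty; the only point requiring a moment of care is the feasibility check $\norm{A\bar{x}-\hat{b}}_{\infty}\leq\delta$ in the ``if'' direction, which is what makes the inclusion $\hat{b}\in A\bar{x}-\delta\Sign(\bar{y})$ (rather than merely $\hat{b}\in A\bar{x}-\delta[-1,1]^{m}$) the right hypothesis to impose.
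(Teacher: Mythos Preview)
Your proposal is correct and follows essentially the same route as the paper: both arguments amount to applying the optimality conditions \eqref{eq:oc_1}--\eqref{eq:oc_2} for \eqref{eq:p_delta} with measurement vector $\hat{b}$ and observing that $A\bar{x}-\hat{b}\in\delta\Sign(\bar{y})$ is equivalent to $\hat{b}\in A\bar{x}-\delta\Sign(\bar{y})$. Your remark that the \eqref{eq:bp} hypothesis is not strictly needed for the equivalence itself (only to guarantee a certificate $\bar{y}$ exists) is accurate and mirrors how the paper uses it.
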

\begin{proof}
  Optimality of $\bar{x}$ for~\eqref{eq:bp} with $A$ and $b=A\bar{x}$
  is characterized by the existence of a vector $\bar{y}$ such that
  $-A^\top\bar{y}\in\Sign(\bar{x})$, see,
  e.g.,~\cite{LorenzPfetschTillmann2015}. Choosing $\hat{b}\in
  A\bar{x}-\delta\Sign(\bar{y})$, we obtain that additionally,
  $A\bar{x}\in\hat{b}+\delta\Sign(\bar{y})$. The claim now follows
  immediately from~\eqref{eq:oc_1} and~\eqref{eq:oc_2}.\qed
\end{proof}

To use Lemma~\ref{lem:constructPdeltaSol} to construct instances
for~\eqref{eq:p_delta}, note that in addition to $A$ and $\bar{x}$, we
also need an optimal dual certificate $\bar{y}$ for the
associated~\eqref{eq:bp} problem. For the L1-Testset instances, a
detailed description of how this can be computed is provided
in~\cite[Sections~4 and 5 (particularly,
Theorem~5.1)]{LorenzPfetschTillmann2015}; in short, we can either obtain $\bar{y}$ with a closed-form expression or apply alternating projections onto $\Sign(\bar{x})$ and the image space of $A^\top$. The vectors $\hat{b}$ are then constructed as $A\bar{x}-\delta\sign(\bar{y})$. 
For the present experiments, we randomly choose two instances for each
of the matrix sizes $512\times\{1025,1536,2048,4096\}$ and
$1024\times\{2048,3072,4096,8192\}$
(cf.~\cite[Table~II]{LorenzPfetschTillmann2015})---one in which
$\bar{x}$ has nonzero entries of high dynamic range, and one with low
dynamic range. This way, we end up with 16 instances, which
we will identify by their L1-Testset number (the instance details can
be found in the table accompanying the test instance download
package). The $\delta$-values were chosen uniformly at random from the interval $[0.1,5]$ for each instance.
Moreover, since we observed that the $\bar{y}$ constructed in the above-mentioned ways are fully dense (which, by complementary slackness, implies that the primal active sets in the respective optimal solutions are also as large as possible), we computed a second set of $\hat{b}$-vectors using other dual certificates that were computed, aiming at sparsity, by solving problems of the form
\[
\min_{y\in\R^m} \norm{y}_1\quad\st\quad -A^\top y\in\Sign(\bar{x}).
\]
Thus, we have 32 instances in total, with pairs sharing the same instance number, $A$, $\delta$ and optimal solution $\bar{x}$ but having different measurement vectors~$\hat{b}$.
(Regarding instance constructions for basis pursuit and related problems in general, it is worth mentioning that the above $\ell_1$-minimization problem to compute dual certificates can be solved very efficiently via its straightforward LP reformulation, even for large-scale data where an alternating projection approach may no longer work or take an unreasonably long time.)

\begin{table}[tb]
  \centering
  \begin{tabularx}{\textwidth}{@{}r@{\qquad}r@{\qquad}r@{\qquad}r@{\qquad}r@{\qquad}r@{\qquad}r@{\qquad}X@{}}\toprule
  inst. no. & $m\times n$ & $\delta$ & $\abs{{\cal S}}$ & $\abs{{\cal A}}$ & \multicolumn{2}{c}{time $\ell_1$-HOUDINI} & time Gurobi\\
            & & & &                     & (active set) & (Gurobi) & \\\midrule
  7         & $512\times 1024$  & 4.09 & 34 & 512  & 0.48  & 2.44   & 0.47 \\ 
            &                   &      &    & 72   & -     & 2.46   & 0.47 \\
  485       & $512\times 1024$  & 4.54 & 51 & 512  & 1.68  & 98.51  & 1.31 \\ 
            &                   &      &    & 96   & 1.01  & -      & 1.12 \\
  25        & $512\times 1536$  & 0.72 & 14 & 512  & 0.20  & 3.46   & 0.82 \\ 
            &                   &      &    & 31   & 0.19  & 3.50   & 0.81 \\
  319       & $512\times 1536$  & 4.58 & 22 & 512  & 0.38  & 15.16  & 1.70 \\ 
            &                   &      &    & 43   & 0.24  & 9.64   & 1.53 \\
  228       & $512\times 2048$  & 3.20 & 51 & 512  & 5.09  & -      & 1.10 \\ 
            &                   &      &    & 141  & 3.08  & -      & 0.95 \\
  338       & $512\times 2048$  & 0.58 & 20 & 512  & 0.70  & -      & 1.93 \\ 
            &                   &      &    & 45   & 0.36  & 15.19  & 1.43 \\
  74        & $512\times 4096$  & 1.47 & 10 & 512  & 0.16  & 17.87  & 1.27 \\ 
            &                   &      &    & 38   & 0.11  & 1.00   & 1.22 \\
  347       & $512\times 2048$  & 2.78 & 10 & 512  & 0.10  & 8.18   & 1.25 \\ 
            &                   &      &    & 32   & 0.06  & 0.82   & 1.24 \\
  239       & $1024\times 2048$ & 4.79 & 84 & 1024 & 0.62  & 2.00   & 0.08 \\ 
            &                   &      &    & 148  & 0.60  & 1.86   & 0.07 \\
  357       & $1024\times 2048$ & 4.83 & 27 & 1024 & 1.63  & -      & 3.41 \\ 
            &                   &      &    & 55   & 0.65  & 36.92  & 2.73 \\
  99        & $1024\times 3072$ & 0.87 & 18 & 1024 & 0.71  & 19.02  & 3.40 \\ 
            &                   &      &    & 47   & 0.58  & 16.45  & 3.45 \\
  527       & $1024\times 3072$ & 4.86 & 99 & 1024 & 20.37 & -      & 1.75 \\ 
            &                   &      &    & 234  & 11.43 & -      & 1.54 \\
  263       & $1024\times 4096$ & 4.79 & 97 & 1024 & 30.76 & -      & 2.88 \\ 
            &                   &      &    & 245  & 22.23 & 411.83 & 2.55 \\
  416       & $1024\times 4096$ & 2.48 & 26 & 1024 & 1.89  & -      & 6.74 \\ 
            &                   &      &    & 60   & 1.01  & 47.69  & 3.93 \\
  148       & $1024\times 8192$ & 4.02 & 20 & 1024 & 1.13  & 21.97  & 4.90 \\ 
            &                   &      &    & 64   & 1.01  & 19.42  & 4.82 \\
  421       & $1024\times 8192$ & 0.80 & 9  & 1024 & 0.60  & -      & 4.92 \\ 
            &                   &      &    & 43   & 0.26  & -      & 4.82 \\\bottomrule
  \end{tabularx}
  \caption{Runtime comparison of $\ell_1$-\textsc{Houdini} against Gurobi.}
  \label{tab:HOUDINIvsGurobi}
\end{table}

The running time results of our experiments (conducted in
\textsc{Matlab}~2014a, using Gurobi~6.5.2, on Ubuntu with an Intel\textsuperscript{\textregistered} Core\textsuperscript{\texttrademark} i7-4550U CPU @ 1.50GHz $\times$ 4 processor) are summarized in Table~\ref{tab:HOUDINIvsGurobi}. 

In the majority of cases, we observed that $\ell_{1}$-\textsc{Houdini} using specialized active-set methods for the subproblems is considerably faster than $\ell_{1}$-\textsc{Houdini} using Gurobi (31 out of 32 instances) and even faster than Gurobi used as standalone LP solver (21 out of 32 instances). Another comparison suggests that Gurobi used as standalone solver is usually faster than $\ell_{1}$-\textsc{Houdini} using Gurobi for the subproblems (30 out of 32 instances). (Nevertheless, note that $\ell_{1}$-\textsc{Houdini} generates the entire solution path w.r.t. the homotopy parameter, whereas solving the LP formulation of (\ref{eq:p_delta}) solely yields a solution for the final parameter $\delta$.)

In particular, it seems beneficial to use $\ell_{1}$-\textsc{Houdini} when $\abs{{\cal S}}$ is small (i.e., when the optimal solution $x^{*}$ is relatively sparse). This is a natural feature of our method since the sparsity of the iterates has direct impact on the size of the arising subproblems. Analogously, the size of the primal active set $\abs{\cal A}$ directly affects the size of the subproblems. Our experiments show that solving the very same instace with smaller optimal active set (induced by a modified measurement vector $\hat b$) causes an average speedup of 33.9\% and 31.4\% using $\ell_{1}$-\textsc{Houdini} with active-set methods and Gurobi for the subproblems, respectively. In contrast, using Gurobi as standalone LP solver induces an average speedup of 10.4\%.

In additional experiments, we observed that $\ell_{1}$-\textsc{Houdini} is also competitive in the Basis Pursuit setting ($\delta\approx 0$). To that end, we compared our method with $\ell_{1}$-Homotopy and SPGL1, two of the fastest methods according to \cite{LorenzPfetschTillmann2015}. Finally, we performed testruns on some of the large-scale instances with sparse coefficient matrices from the L1-Testset, where $\ell_{1}$-\textsc{Houdini} was competitive as well and often considerably faster than Gurobi (even though Gurobi is tuned for sparse data). However, we need to mention that our current implementation (availabe on the first author's homepage) suffers numerical issues on particular instances of our testset, especially on those with sparse coefficient matrices.

\section{Extensions and Conclusion}
\label{sec:extensions}
Our algorithm can be extended straightforwardly to treat the more
general problem class
\begin{equation}\label{eq:gen_prob}
  \min_{x\in\R^n} \norm{x}_1\quad\st\quad \alpha\leq Ax-b\leq\beta,~Dx=d,
\end{equation}
assuming w.l.o.g. that $\alpha<\beta$ and that the feasible set is
nonempty. 

To that end, 
first observe that we can rewrite
\[
\alpha\leq Ax-b\leq\beta \quad\Leftrightarrow\quad \underbrace{\alpha-\tfrac{\alpha+\beta}{2}}_{=-\gamma}\leq Ax-\underbrace{\left(b+\tfrac{\alpha+\beta}{2}\right)}_{\eqqcolon\tilde{b}}\leq\underbrace{\beta-\tfrac{\alpha+\beta}{2}}_{\eqqcolon\gamma};
\]
since $\alpha<\beta$, $\gamma_i\neq 0$ for all~$i$, we can scale each
row by $\hat\delta/\gamma_i$ for an arbitarily chosen $\hat\delta>0$ and obtain 
\begin{align*}
&-\hat\delta G\gamma\leq G(Ax-\tilde{b})\leq\hat\delta G\gamma\\
\quad\Leftrightarrow\quad &-\hat\delta\One\leq GAx-G\tilde{b}\leq\hat\delta\One \quad\Leftrightarrow\quad \norm{GAx-G\tilde{b}}_\infty\leq\hat\delta,
\end{align*}
where $G=\diag(1/\gamma_1,\dots,1/\gamma_m)$. Thus, in the absence of
equality constraints $Dx=d$, \eqref{eq:gen_prob} can be recast into
the form~\eqref{eq:p_delta} directly. 

However, such an equality constraint is obviously equivalent to
requiring $\norm{Dx-d}_\infty\leq 0$. Therefore, we can extend the
homotopy treatment of problem~\eqref{eq:p_delta} (where we drive the
homotopy parameter down to the target $\delta$-value) to
\eqref{eq:gen_prob} by linking the homotopy parameter $\delta$ to the
bounds from both $\ell_\infty$-norm constraints derived from
\eqref{eq:gen_prob} and reducing it all the way to zero. For
$\delta=0$, the homotopy constraints
$\norm{GAx-G\tilde{b}}_\infty\leq\hat\delta+\delta$ and
$\norm{Dx-d}_\infty\leq\delta$ then correspond exactly to those
of~\eqref{eq:gen_prob}. Considering two $\ell_\infty$-norm constraints
simultaneously, and the offset $\hat\delta$ in one of them, leads to
minor simple modifications to the update subproblems in our algorithm;
we omit the straightforward details for brevity. Note that for
$\delta=\delta^0\define\max\{\norm{d}_\infty,\norm{b}_\infty-\hat\delta\}$,
$x=0$ is an optimal solution for the problem
\[
\min_{x\in\R^n} \norm{x}_1\quad\st\quad \norm{GAx-G\tilde{b}}_\infty\leq\hat\delta+\delta,~\norm{Dx-d}_\infty\leq\delta
\]
and thus provides the starting point for our method in the present
context.

Further generalizations are likely possibly. For instance, it should
be possible to modify the algorithm to treat one-sided bounds
($\alpha_i=-\infty$ or $\beta_i=+\infty$); then, in particular, the
case of nonnegative variables could be handled directly, and by means
of a standard variable split into the respective positive and negative
parts, general linear objective functions (with all coefficients
nonzero) could be replaced by the $\ell_1$-norm w.r.t. appropriately
rescaled variables. Since a thorough investigation of such
considerations goes beyond the scope of the present paper, we leave it
open for future research.

\appendix

\section{Active-Set Method for Linear Programs}
\label{sec:lp_active_set_method}

\subsection{Optimality Condtions for Linear Programs}
\label{sec:lp_optimality_conditions}

Let $c\in\R^{n}$, $A\in\R^{m\times n}$, $b\in\R^{m}$, $D\in\R^{k\times n}$, $e\in\R^{k}$ and $\sigma\in\set{\pm 1}^{n}$.\footnote{At this point, we use the standard notation for linear programs. The labels $A$ and $b$ appear as well in the preceding sections. However, they do not have the same meaning here.} We consider the linear program
\begin{equation}
  \label{eq:linear_program}
  \begin{aligned}
    \min_{x\in\R^{n}}&\quad & c^{\top}&x\\
    \mathrm{s.t.}&\quad & A&x \, = \, b\\
    &\quad & D&x \, \geq \, e\\
    &\quad & \diag(\sigma)&x \, \geq \, 0
  \end{aligned}
\end{equation}
and assume that it is feasible and bounded. By the well-known KKT conditions (see, e.g., \cite[Theorem~12.1]{NocedalWright2006}), $x^{*}$ is an optimal solution of (\ref{eq:linear_program}) if and only if there exist Lagrange multipliers $\lambda\in\R^{m}$, $\mu\in\R^{k}$ and $\nu\in\R^{n}$ such that the following conditions hold:
\begin{subequations}
\begin{align}
  Ax^{*} &= b\label{eq:lp_primal_feas_1}\\
  Dx^{*} &\geq e\label{eq:lp_primal_feas_2}\\
  \diag(\sigma) x^{*} &\geq 0\label{eq:lp_primal_feas_3}\\
  A^{\top}\lambda +D^{\top}\mu + \diag(\sigma)\nu&= c\label{eq:lp_dual_feas_1}\\
  \mu \odot (Dx^{*} - e) &= 0\label{eq:lp_complementarity_1}\\
  \nu \odot x^{*} &= 0\label{eq:lp_complementarity_2}\\
  \mu &\geq 0\label{eq:lp_dual_feas_2}\\
  \nu &\geq 0.\label{eq:lp_dual_feas_3}
\end{align}
\label{eq:lp_optimality_conditions}%
\end{subequations}

\subsection{General Theme}
\label{sec:generaltheme}

Suppose that $x^{\ell}\in\R^{n}$ is feasible for (\ref{eq:linear_program}), i.e., it satisfies (\ref{eq:lp_primal_feas_1})-(\ref{eq:lp_primal_feas_3}). Then, there exist non-empty sets ${\cal{A}}\subseteq\set{1,\dots,k}$ and ${\cal S}\subseteq\set{1,\dots, n}$ such that
\begin{equation*}
  D^{{\cal{A}}}x^{\ell} = e_{{\cal{A}}},\quad D^{{\cal{A}}^{c}}x^{\ell} > e_{{\cal{A}}^{c}},\quad x_{{\cal S}^{c}}^{\ell} = 0\quad \text{and}\quad \abs{x^{\ell}_{{\cal S}}} > 0.
\end{equation*}
We refer to ${\cal A}$ as the \emph{active set} and further to ${\cal S}$ as the \emph{support} of $x^{\ell}$. In the context of (\ref{eq:lp_complementarity_1}) and (\ref{eq:lp_complementarity_2}), necessarily $\mu_{{\cal A}^{c}} = 0$ and $\nu_{{\cal S}} = 0$ in case $x^{\ell}$ is an optimal solution to (\ref{eq:linear_program}). The following Lemma exploits this fact and provides alternative optimality conditions for (\ref{eq:linear_program}).

\begin{lemma}
  \label{lemma:reducedlagrangeopt}
  A point $x^{\ell}$ is an optimal solution to (\ref{eq:linear_program}) if and only if it is feasible and there exist $\lambda\in\R^{m}$ and $\mu_{{\cal A}}\in\R^{\abs{{\cal A}}}$ such that
  \begin{subequations}
    \begin{align}
      A_{{\cal S}}^{\top}\lambda + (D_{{\cal S}}^{{\cal A}})^{\top}\mu_{{\cal A}} &= c_{{\cal S}},\label{eq:reducedlagrangeopt1}\\
      \diag(\sigma_{{\cal S}^{c}})(c_{{\cal S}^{c}} - A_{{\cal S}^{c}}^{\top}\lambda - (D_{{\cal S}^{c}}^{{\cal A}})^{\top}\mu_{{\cal A}}) &\geq 0\quad\text{and}\label{eq:reducedlagrangeopt2}\\
      \mu_{{\cal A}} &\geq 0\label{eq:reducedlagrangeopt3}.
    \end{align}
  \end{subequations}
\end{lemma}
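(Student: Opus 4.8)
The plan is to derive the alternative optimality conditions \eqref{eq:reducedlagrangeopt1}--\eqref{eq:reducedlagrangeopt3} directly from the standard KKT conditions \eqref{eq:lp_primal_feas_1}--\eqref{eq:lp_dual_feas_3}, using the complementarity relations to eliminate the ``inactive'' multiplier components. First I would prove the forward implication: suppose $x^{\ell}$ is optimal for \eqref{eq:linear_program}. Then it is feasible, and there exist $\lambda\in\R^{m}$, $\mu\in\R^{k}$, $\nu\in\R^{n}$ satisfying all of \eqref{eq:lp_optimality_conditions}. By definition of the active set ${\cal A}$ and support ${\cal S}$ of $x^{\ell}$, we have $D^{{\cal A}^{c}}x^{\ell} > e_{{\cal A}^{c}}$ and $x^{\ell}_{{\cal S}^{c}} = 0$ (hence $x^{\ell}_{{\cal S}}$ has all entries nonzero). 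The complementarity conditions \eqref{eq:lp_complementarity_1} and \eqref{eq:lp_complementarity_2} then force $\mu_{{\cal A}^{c}} = 0$ and $\nu_{{\cal S}} = 0$. Now I would take the dual feasibility equation \eqref{eq:lp_dual_feas_1}, namely $A^{\top}\lambda + D^{\top}\mu + \diag(\sigma)\nu = c$, and restrict it to the rows indexed by ${\cal S}$ and by ${\cal S}^{c}$ separately. On the ${\cal S}$-rows, since $\nu_{{\cal S}} = 0$ and $D^{\top}\mu = (D^{{\cal A}})^{\top}\mu_{{\cal A}}$ (because $\mu_{{\cal A}^{c}} = 0$), this reads $A_{{\cal S}}^{\top}\lambda + (D_{{\cal S}}^{{\cal A}})^{\top}\mu_{{\cal A}} = c_{{\cal S}}$, which is \eqref{eq:reducedlagrangeopt1}. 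On the ${\cal S}^{c}$-rows, the same substitution gives $A_{{\cal S}^{c}}^{\top}\lambda + (D_{{\cal S}^{c}}^{{\cal A}})^{\top}\mu_{{\cal A}} + \diag(\sigma_{{\cal S}^{c}})\nu_{{\cal S}^{c}} = c_{{\cal S}^{c}}$; solving for $\diag(\sigma_{{\cal S}^{c}})\nu_{{\cal S}^{c}}$ and multiplying by $\diag(\sigma_{{\cal S}^{c}})$ (which is an involution) yields $\diag(\sigma_{{\cal S}^{c}})(c_{{\cal S}^{c}} - A_{{\cal S}^{c}}^{\top}\lambda - (D_{{\cal S}^{c}}^{{\cal A}})^{\top}\mu_{{\cal A}}) = \nu_{{\cal S}^{c}} \geq 0$ by \eqref{eq:lp_dual_feas_3}, which is \eqref{eq:reducedlagrangeopt2}. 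Finally \eqref{eq:reducedlagrangeopt3} is just the ${\cal A}$-restriction of \eqref{eq:lp_dual_feas_2}.

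For the converse, suppose $x^{\ell}$ is feasible and there exist $\lambda$ and $\mu_{{\cal A}}\geq 0$ satisfying \eqref{eq:reducedlagrangeopt1} and \eqref{eq:reducedlagrangeopt2}. I would reconstruct full multiplier vectors: keep $\lambda$ as is, set $\mu_{{\cal A}}$ as given and $\mu_{{\cal A}^{c}} \define 0$, and define $\nu_{{\cal S}} \define 0$ and $\nu_{{\cal S}^{c}} \define \diag(\sigma_{{\cal S}^{c}})(c_{{\cal S}^{c}} - A_{{\cal S}^{c}}^{\top}\lambda - (D_{{\cal S}^{c}}^{{\cal A}})^{\top}\mu_{{\cal A}})$. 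Then I need to verify that $(\lambda,\mu,\nu)$ satisfies all of \eqref{eq:lp_optimality_conditions}. Primal feasibility \eqref{eq:lp_primal_feas_1}--\eqref{eq:lp_primal_feas_3} holds by assumption. Dual sign conditions \eqref{eq:lp_dual_feas_2}, \eqref{eq:lp_dual_feas_3} hold: $\mu_{{\cal A}^{c}}=0\geq 0$ and $\mu_{{\cal A}}\geq 0$ give \eqref{eq:lp_dual_feas_2}, while $\nu_{{\cal S}}=0$ and $\nu_{{\cal S}^{c}}\geq 0$ (by \eqref{eq:reducedlagrangeopt2}) give \eqref{eq:lp_dual_feas_3}. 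Complementarity \eqref{eq:lp_complementarity_1} holds because $\mu_{{\cal A}^{c}}=0$ while $(Dx^{\ell}-e)_{{\cal A}}=0$ by the definition of ${\cal A}$; likewise \eqref{eq:lp_complementarity_2} holds because $\nu_{{\cal S}}=0$ while $x^{\ell}_{{\cal S}^{c}}=0$. It remains to check the full dual feasibility equation \eqref{eq:lp_dual_feas_1}: on the ${\cal S}$-block it is exactly \eqref{eq:reducedlagrangeopt1}; on the ${\cal S}^{c}$-block it holds by the very definition of $\nu_{{\cal S}^{c}}$ (using again that $\diag(\sigma_{{\cal S}^{c}})$ is its own inverse, so $\diag(\sigma_{{\cal S}^{c}})\nu_{{\cal S}^{c}} = c_{{\cal S}^{c}} - A_{{\cal S}^{c}}^{\top}\lambda - (D_{{\cal S}^{c}}^{{\cal A}})^{\top}\mu_{{\cal A}}$). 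Hence $(\lambda,\mu,\nu)$ is a valid KKT triple and $x^{\ell}$ is optimal by \eqref{eq:lp_optimality_conditions}.

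I do not expect any serious obstacle here; the proof is essentially bookkeeping that ``splits'' the KKT system along the active/inactive and support/nonsupport index partitions. The only point requiring a little care is making sure the two index partitions (${\cal A}$ versus ${\cal A}^{c}$ on the inequality side, ${\cal S}$ versus ${\cal S}^{c}$ on the variable side) are kept cleanly separate in the blocks of \eqref{eq:lp_dual_feas_1}, and that the involution property $\diag(\sigma_{{\cal S}^{c}})^{2}=I$ is invoked correctly when passing between \eqref{eq:reducedlagrangeopt2} and the reconstructed $\nu_{{\cal S}^{c}}$. One should also note explicitly, as the lemma's surrounding text already does, that feasibility of $x^{\ell}$ guarantees the partition into ${\cal A},{\cal S}$ is well-defined; this is why feasibility must be assumed in the converse direction rather than derived.
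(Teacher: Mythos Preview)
Your proposal is correct and follows exactly the approach the paper takes: the paper's proof simply asserts that the reduced conditions are equivalent to the full KKT system \eqref{eq:lp_primal_feas_1}--\eqref{eq:lp_dual_feas_3} under the specific choices $\mu_{{\cal A}^{c}}=0$, $\nu_{{\cal S}}=0$, and $\nu_{{\cal S}^{c}}=\diag(\sigma_{{\cal S}^{c}})(c_{{\cal S}^{c}}-A_{{\cal S}^{c}}^{\top}\lambda-(D_{{\cal S}^{c}}^{{\cal A}})^{\top}\mu_{{\cal A}})$, leaving the verification to the reader. You have supplied precisely that verification in both directions, so there is nothing to add.
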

\begin{proof}
  It can easily be shown that the conditions in Lemma \ref{lemma:reducedlagrangeopt} are equivalent to conditions (\ref{eq:lp_primal_feas_1})--(\ref{eq:lp_dual_feas_3}) with $\mu_{{\cal A}^{c}} = 0$, $\nu_{{\cal S}} = 0$ and
  \begin{equation}
    \label{eq:numultiplier}
    \nu_{{\cal S}^{c}} = \diag(\sigma_{{\cal S}^{c}})(c_{{\cal S}^{c}} - A_{{\cal S}^{c}}^{\top}\lambda - (D_{{\cal S}^{c}}^{{\cal A}})^{\top}\mu_{{\cal A}}).
  \end{equation}
  \qed
\end{proof}

Starting from $x^{\ell}$, our goal is to approach a solution of (\ref{eq:linear_program}) by generating \emph{descent directions} $\xi$ that preserve the active set as well as the support and, should this not be possible, by changing these sets appropriately. We repeat these steps until we finally identify ${\cal A}$, ${\cal S}$, $\lambda$ and $\mu_{{\cal A}}$ satisfying (\ref{eq:reducedlagrangeopt1})--(\ref{eq:reducedlagrangeopt3}).

\subsection{Descent Directions and Blocking Constraints}
\label{sec:descent_directions_and_blocking_constraints}

If there exists a solution of the linear system
\begin{equation}
  \label{eq:descent_direction}
  \renewcommand{\arraystretch}{1.2}
  \begin{bmatrix}
    A_{{\cal S}}\\
    D_{{\cal S}}^{{\cal A}}\\
    c_{{\cal S}}^{\top}
  \end{bmatrix}
  \renewcommand{\arraystretch}{1}
  \xi_{{\cal S}} =
  \begin{pmatrix}
    0\\
    0\\
    -1
  \end{pmatrix}
  \quad\text{and}\quad \xi_{{\cal S}^{c}} = 0,
\end{equation}
then it holds for arbitrary $\alpha > 0$ that
\begin{equation}
  \label{eq:descent_direction_feasibility_1}
  A(x^{\ell} + \alpha\xi) = b,\quad D^{{\cal A}}(x^{\ell} + \alpha\xi) = e_{{\cal A}}\quad\text{and}\quad x_{{\cal S}^{c}}^{\ell} + \alpha \xi_{{\cal S}^{c}} = 0.
\end{equation}
The largest $\alpha > 0$ such that also
\begin{equation}
  \label{eq:descent_direction_feasibility_2}
  D^{{\cal A}^{c}}(x^{\ell} + \alpha\xi) \geq e_{{\cal A}^{c}}\quad\text{and}\quad \diag(\sigma_{{\cal S}})(x_{{\cal S}}^{\ell} + \alpha\xi_{{\cal S}}) \geq 0
\end{equation}
is given by
\begin{equation}
  \label{eq:lp_primal_step_size}
  \alpha = \min \left(\min_{i\in{\cal A}^{c}\atop d_{i}^{\top}\xi < 0} \frac{e_{i} - d_{i}^{\top}x^{\ell}}{d_{i}^{\top}\xi}, \min_{j\in{\cal S}\atop\sigma_{j}\xi_{j} < 0} -\frac{x_{j}}{\xi_{j}}\right).
\end{equation}
Note that $0 < \alpha < \infty$ since we assumed that (\ref{eq:linear_program}) is bounded. The sets
\begin{equation}
  \label{eq:blocking_constraints}
  {\cal A}^{+} = \set{i \in {\cal A}^{c} : d_{i}^{\top}(x^{\ell} + \alpha\xi) = e_{i}} \quad \text{and} \quad {\cal S}^{-} = \set{j \in {\cal S} : x^{\ell}_{j} + \alpha\xi_{j} = 0}
\end{equation}
are the index sets where the minimum is attained, i.e., the sets of \emph{blocking constraints}. Each $i\in{\cal A}^{+}$ joins the active set and each $j\in{\cal S}^{-}$ leaves the support if we perform the step $\alpha\xi$. Consequently, we update $x^{\ell+1} = x^{\ell} + \alpha\xi$, ${\cal A} = {\cal A}\cup{\cal A}^{+}$ and ${\cal S} = {\cal S}\setminus{\cal S}^{-}$.

\subsection{Lagrange Multipliers}
\label{sec:lagrange_multipliers}

If there is no direction according to (\ref{eq:descent_direction}), then zero is an optimal solution of
\begin{equation}
  \label{eq:subproblem}
  \begin{aligned}
    \min_{\xi_{{\cal S}}\in\R^{\abs{{\cal S}}}} && c_{{\cal S}}^{\top}&\xi_{{\cal S}}\\
    \mathrm{s.t.} &&
    \begin{bmatrix}
      A_{{\cal S}}\\
      D_{{\cal S}}^{{\cal A}}
    \end{bmatrix}
    &\xi_{{\cal S}} = 0
  \end{aligned}
\end{equation}
Employing KKT conditions again, we see that there exist $\lambda$ and $\mu_{{\cal A}}$ satisfying \eqref{eq:reducedlagrangeopt1}. For the case that $\lambda$ and $\mu_{{\cal A}}$ additionally satisfy \eqref{eq:reducedlagrangeopt2}--\eqref{eq:reducedlagrangeopt3}, Lemma \ref{lemma:reducedlagrangeopt} states that $x^{\ell}$ is an optimal solution.

Otherwise, with $\nu_{{\cal S}^{c}}$ according to \eqref{eq:numultiplier}, there exists at least one index $i\in{\cal A}$ such that $\mu_{i} < 0$ or $j\in{\cal S}^{c}$ such that $\nu_{j} < 0$. We select the smaller of both values and set ${\cal A} = {\cal A} \setminus \set{i}$ or ${\cal S} = {\cal S}\cup\set{j}$, respectively. Then, we search a new direction according to Subsection \ref{sec:descent_directions_and_blocking_constraints}.

\subsection{Feasibility of Generated Directions}
\label{sec:feasibilityofgenerateddirections}

In the context of the previous section, suppose that $\mu_{i} < 0$ and we set ${\cal A} = {\cal A} \setminus\set{i}$. Afterwards, we go back to \eqref{eq:descent_direction} and find a direction $\xi$. It holds that
\begin{equation}
  \label{eq:direction_feasibility}
  \begin{aligned}
    -1 \overset{(\ref{eq:descent_direction})}{=} c_{{\cal S}}^{\top}\xi_{{\cal S}} &\overset{(\ref{eq:reducedlagrangeopt1})}{=} (A_{{\cal S}}^{\top}\lambda + (D_{{\cal S}}^{{\cal A}})^{\top}\mu_{{\cal A}} + (D_{{\cal S}}^{i})^{\top}\mu_{i})^{\top}\xi_{{\cal S}}\\
    &\hspace{1.825mm}= \lambda^{\top}A_{{\cal S}}\xi_{{\cal S}} + \mu_{{\cal A}}^{\top}D_{{\cal S}}^{{\cal A}}\xi_{{\cal S}} + \mu_{i}D_{{\cal S}}^{i}\xi_{{\cal S}}\\
    &\hspace{.8mm}\overset{(\ref{eq:descent_direction})}{=} \mu_{i}d_{i}^{\top}\xi.
  \end{aligned}
\end{equation}
It follows that $d_{i}^{\top}\xi = -\mu_{i}^{-1} > 0$. Consequently, it holds that $d_{i}^{\top}(x^{\ell} + \alpha\xi) > e_{i}$ and the step $\alpha\xi$ preserves the property of ${\cal A}$ exactly reflecting the set of active constraints. An analogous statement holds if we update ${\cal S} = {\cal S}\cup\set{j}$ prior to finding a direction $\xi$. In that case, we obtain $\sigma_{j}\xi_{j} = -\nu_{j}^{-1} > 0$.
\medskip

Note that, if we found $\mu_{\set{i,i'}} < 0$ for distinct indices $i, i'\in{\cal A}$ and set ${\cal A} = {\cal A}\setminus\set{i,i'}$, we would not necessarily get $d_{i}^{\top}\xi > 0$ and $d_{i'}^{\top}\xi > 0$. Repeating the above reasoning only shows $(\mu_{i}d_{i} + \mu_{i'}d_{i'})^{\top}\xi > 0$. The same holds if we have $\nu_{\set{j, j'}} < 0$ or $\mu_{i} < 0$ and $\nu_{j} < 0$. Therefore, we do not change more than one index before we search for a new direction. However, it can occur that we do not immediately find a new direction after changing one index in ${\cal A}$ or ${\cal S}$. In that case, we have two determine Lagrange multipliers repeatedly and change ${\cal A}$ and ${\cal S}$ until we are able to find a new direction. This situation needs to be handled with care in order to correctly keep track of ${\cal A}$ and ${\cal S}$. We capture this aspect in Appendix \ref{sec:algorithm}.
\medskip

\subsection{Fixing New Support Variables}
\label{sec:fixingvariables}

Equation (\ref{eq:direction_feasibility}) further shows that, if we replace $c_{{\cal S}}^{\top}\xi_{{\cal S}} = -1$ by $d_{i}^{\top}\xi = 1$ in (\ref{eq:descent_direction}), this implies $c^{\top}\xi = \mu_{i} < 0$. The resulting system is
\begin{equation}
  \label{eq:direction_equation_replaced}
  \begin{bmatrix}
    A^{{\cal S}}\\
    D^{{\cal S}}_{{\cal A}}\vspace{.5mm}\\
    D^{{\cal S}}_{i}
  \end{bmatrix}
  \xi_{{\cal S}} =
  \begin{pmatrix}
    0\\
    0\\
    1
  \end{pmatrix}.
\end{equation}
Numerically, there is no obvious gain in the replacement of one equation. Essentially, the new constraint specifies $d_{i}^{\top}\xi = 1$. The same reasoning for the case that $j\in{\cal S}$ was recently added to the support shows that by dropping $c_{{\cal S}}^{\top}\xi_{{\cal S}} = -1$ and fixing $\xi_{j} = \sigma_{j}$, we obtain $c^{\top}\xi = \nu_{j} < 0$. Considering the numerical effort, this can be beneficial since we not only drop a constraint but also reduce the number of variables in the system. The result is
\begin{equation}
  \label{eq:direction_fixed_variable}
  \begin{bmatrix}
    A^{{\cal S}\setminus\set{j}}\\
    D^{{\cal S}\setminus\set{j}}_{{\cal A}}
  \end{bmatrix}
  \xi_{{\cal S}\setminus\set{j}} = -\sigma_{j}
  \begin{pmatrix}
    A^{j}\\
    D_{{\cal A}}^{j}
  \end{pmatrix}.
\end{equation}

\subsection{Algorithm and Implementation of $\ell_1$-HOUDINI}
\label{sec:algorithm}

Algorithm \ref{alg:lpactiveset} illustrates the iterative scheme discussed in Appendix \ref{sec:generaltheme}--\ref{sec:fixingvariables}. Additionally, we assume that an initial direction $\xi$ is provided as input since this is the situation we are faced with in Section \ref{sec:practicalconsiderations}.

The conditional statement beginning in Step \ref{step:special_cases} considers two special cases. In that context, ${\cal A}^{-}$ is the set of indices that were consecutively removed from the active set in Steps \ref{step:lagrange_multipliers_begin}--\ref{step:lagrange_multipliers_end} and ${\cal S}^{+}$ is the set of indices that were consecutively added to the support. It can occur that $\abs{{\cal A}^{-}} + \abs{{\cal S}^{+}} > 1$ in case we do not find a direction in Step \ref{step:direction} in a positive number of consecutive iterations.

The first case is $\alpha = 0$ which can occur if $\abs{{\cal A}^{-}} + \abs{{\cal S}^{+}} > 1$ and there exists $i\in{\cal A}^{-}$ such that $d_{i}^{\top}\xi < 0$ or $\sigma_{j}\xi_{j} < 0$ for some $j\in{\cal S}^{+}$. The respective indices are re-added to ${\cal A}$ and re-removed from ${\cal S}$, respectively, before trying to find a new feasible direction.

In the second case, if $\alpha > 0$ and $\abs{{\cal A}^{-}} + \abs{{\cal S}^{+}} > 1$, we can still have $i\in{\cal A}^{-}$ with $d_{i}^{\top}\xi = 0$ or $\sigma_{j}\xi_{j} = 0$ for some $j\in{\cal S}^{+}$. Consequently, the $i$-th constraint stays active and $j$ does not join the support after a step in direction $\xi$. We adapt ${\cal A}$ and ${\cal S}$ accordingly. Since we have performed a non-zero step, we moreover reset ${\cal A}^{-}$ and ${\cal S}^{+}$.
\medskip

Table \ref{tab:primalupdate} puts the primal update from Section \ref{sec:activesetprimal} into the context of Algorithm \ref{alg:lpactiveset}. Notice that problem (\ref{eq:primalactivesetproblem}) needs to be reformulated as a minimization problem in order to have the form (\ref{eq:linear_program}). Table \ref{tab:dualupdate} does the same for the dual update from Section \ref{sec:activesetdual}.

In both the primal and the dual case we applied some easy sign substitutions in order to bring (\ref{eq:reducedlagrangeopt1}) into a simple form. Of course, the respective inverse substitutions appear in the formulas for $\mu_{{\cal S}}$ and $\nu_{{\cal S}^{c}}$, respectively.

Moreover, we used that during the primal update $\sign(y^{k+1}_{I_{D}}) = \sign(A^{I_{D}}\xi^{\ell} - b_{I_{D}})$ throughout.

\SetAlFnt{\normalsize}
\DontPrintSemicolon
\begin{algorithm}
  \caption{Active-Set Method for LPs.}
  \label{alg:lpactiveset}

  \KwIn{$c\in\R^{n}$, $A\in\R^{m\times n}$, $b\in\R^{m}$, $D\in\R^{k\times n}$, $e\in\R^{k}$, $\sigma\in\set{\pm 1}^{n}$, feasible $x^{0}\in\R^{n}$ and associated sets ${\cal A}$ and ${\cal S}$, initial direction $\xi$}
  \KwOut{solution $x^{*}$ to problem (\ref{eq:linear_program})}   
  \BlankLine
  \BlankLine
  $\ell\leftarrow 0$\;
  \While{not stopped}{
    \If{a solution $\xi$ of \eqref{eq:descent_direction} exists}{\label{step:direction}
      $\alpha\leftarrow$ step size according to (\ref{eq:lp_primal_step_size})\;
      $x^{\ell + 1} \leftarrow x^{\ell} + \alpha \xi$\;
      $({\cal A}^{+}, {\cal S}^{-})\leftarrow$ blocking constraints according to (\ref{eq:blocking_constraints})\;
      ${\cal A}\leftarrow{\cal A}\cup{\cal A}^{+}$\;
      ${\cal S}\leftarrow{\cal S}\setminus{\cal S}^{-}$\;
      \If{$\alpha = 0$}{\label{step:special_cases}
        ${\cal A}^{-}\leftarrow {\cal A}^{-}\setminus {\cal A}^{+}$\;
        ${\cal S}^{+}\leftarrow {\cal S}^{+}\setminus {\cal S}^{-}$\;
      }
      \ElseIf{$\abs{{\cal A}^{-}} + \abs{{\cal S}^{+}} > 1$}{
        ${\cal A}\leftarrow{\cal A}\cup\set{i\in{\cal A}^{-} : d_{i}^{\top}\xi = 0}$\;
        ${\cal S}\leftarrow{\cal S}\setminus\set{j\in{\cal S}^{+} : \xi_{j} = 0}$\;
        ${\cal A}^{-}\leftarrow\emptyset$\;
        ${\cal S}^{+}\leftarrow\emptyset$\;
      }
    }
    $\ell\leftarrow \ell + 1$\;
    \Else{
      $(\mu_{{\cal A}}, \nu_{{\cal S}^{c}})\leftarrow$ Lagrange multipliers according to (\ref{eq:reducedlagrangeopt1}) and (\ref{eq:numultiplier})\;
      $i^{-}\leftarrow\argmin_{i\in{\cal A}} \mu_{i}$\;
      $j^{+}\leftarrow\argmin_{j\in{\cal S}^{c}} \nu_{j}$\;
      \If{$\mu_{i^{-}} \geq 0$ and $\nu_{j^{+}} \geq 0$}{\label{step:lagrange_multipliers_begin}
        \Return{$x^{*} = x^{\ell}$}\;
      }
      \ElseIf{$\mu_{i^{-}} < \nu_{j^{+}}$}{
        ${\cal A}\leftarrow {\cal A}\setminus\set{i^{-}}$\;
        ${\cal A}^{-}\leftarrow {\cal A}^{-} \cup \set{i^{-}}$\;
      }
      \Else{
        ${\cal S}\leftarrow {\cal S}\cup\set{j^{+}}$\;
        ${\cal S}^{+}\leftarrow {\cal S}^{+} \cup \set{j^{+}}$\;\label{step:lagrange_multipliers_end}
      }
    }
  }
\end{algorithm}

\renewcommand{\arraystretch}{1.8}
\begin{table}
  \begin{tabular}{|L{0.126\textwidth} C{0.8\textwidth}|}
    \hline
    ${\cal S}$ &$I_{D}$\\
    ${\cal A}$ &$J_{D}\setminus J_{P}$\\
    $A_{{\cal S}}$ &$(-A_{J_{P}}^{I_{D}})^{\top}$\\
    $D_{{\cal S}}^{{\cal A}}$ &$(A^{\top}_{J_{D}\setminus J_{P}}\psi^{\ell}) \odot (-A^{I_{D}}_{J_{D}\setminus J_{P}})^{\top}$\\
    $c_{{\cal S}}$ &$-\sign(A^{I_{D}}x^{k} - b_{I_{D}})$\\
    (\ref{eq:descent_direction}) &$\begin{aligned}(A^{I_{D}}_{J_{D}})^{\top}e_{I_{D}} &= 0\\\sign(A^{I_{D}}x^{k} - b_{I_{D}})^{\top}e_{I_{D}} &= 1\end{aligned}$\\
    (\ref{eq:reducedlagrangeopt1}) &$A^{I_{D}}_{J_{D}} \hat d_{J_{D}} = -\sign(A^{I_{D}}x^{k} - b_{I_{D}})$\\
    $\mu_{{\cal A}}$ &$-(A^{\top}_{J_{D}\setminus J_{P}}\psi^{\ell}) \odot \hat d_{J_{D}\setminus J_{P}}$\\
    $\sigma_{{\cal S}^{c}}$ &$\sign(A^{I_{P}\setminus I_{D}}x^{k} - b_{I_{P}\setminus I_{D}})$\\
    $c_{{\cal S}^{c}}$ &$-\sign(A^{I_{P}\setminus I_{D}}x^{k} - b_{I_{P}\setminus I_{D}})$\\
    $A_{{\cal S}^{c}}$ &$(-A^{I_{P}\setminus I_{D}}_{J_{P}})^{\top}$\\
    $D_{{\cal S}^{c}}^{{\cal A}}$ &$(A_{J_{D}\setminus J_{P}}^{\top}\psi^{\ell})\odot(-A^{I_{P}\setminus I_{D}}_{J_{D}\setminus J_{P}})^{\top}$\\
    $\nu_{{\cal S}^{c}}$ &$- \sign(A^{I_{P}\setminus I_{D}}x^{k} - b_{I_{P}\setminus I_{D}}) \odot A^{I_{P}\setminus I_{D}}\hat d - \One$\\\hline
  \end{tabular}
  \caption{Active-Set Implementation of the Dual Update.}
  \label{tab:dualupdate}
\end{table}
\renewcommand{\arraystretch}{1}

\renewcommand{\arraystretch}{1.8}
\begin{table}
  \begin{tabular}{|L{0.126\textwidth} C{0.8\textwidth}|}
    \hline
    ${\cal S}$&$J_{P} \cup \set{t}$\\
    ${\cal A}$&$I_{P}\setminus I_{D}$\\
    $A_{{\cal S}}$ &$\begin{bmatrix}A_{J_{P}}^{I_{D}} \quad \sign(y_{I_{D}}^{k+1})\end{bmatrix}$\\
    $D_{{\cal S}}^{{\cal A}}$ &$\begin{bmatrix}-\sign(A^{I_{P}\setminus I_{D}}\xi^{\ell} - b_{I_{P}\setminus I_{D}}) \odot A_{J_{P}}^{I_{P}\setminus I_{D}} \quad -\One\end{bmatrix}$\\
    $c_{{\cal S}}$ &$(0, -1)^{\top}$\\
    (\ref{eq:descent_direction})&$A^{I_{P}}_{J_{P}}d_{J_{P}} = -\sign(A^{I_{P}}\xi^{\ell} - b_{I_{P}})$\\
    (\ref{eq:reducedlagrangeopt1})&$\begin{aligned}(A^{I_{P}}_{J_{P}})^{\top} &\hat e_{I_{P}} = 0\\ \sign(A^{I_{P}}\xi^{\ell} - b_{I_{P}})^{\top} &\hat e_{I_{P}} = 1\end{aligned}$\\
    $\mu_{{\cal A}}$ & $\sign(A^{I_{P}\setminus I_{D}}\xi^{\ell} - b_{I_{P}\setminus I_{D}})\odot \hat e_{I_{P}\setminus I_{D}}$\\
    $\sigma_{{\cal S}^{c}}$ &$A_{J_{P}\setminus J_{D}}^{\top}y^{k+1}$\\
    $c_{{\cal S}^{c}}$ &$0$\\
    $A_{{\cal S}^{c}}$ &$A_{J_{D}\setminus J_{P}}^{I_{D}}$\\
    $D_{{\cal S}^{c}}^{{\cal A}}$&$-\sign(A^{I_{P}\setminus I_{D}}\xi^{\ell} - b_{I_{P}\setminus I_{D}}) \odot A_{J_{D}\setminus J_{P}}^{I_{P}\setminus I_{D}}$\\
    $\nu_{{\cal S}^{c}}$ &$-(A_{J_{D}\setminus J_{P}}^{\top}y^{k+1})\odot A_{J_{D}\setminus J_{P}}^{\top}\hat e$\\\hline
  \end{tabular}
  \caption{Active-Set Implementation of the Primal Update.}
  \label{tab:primalupdate}
\end{table}
\renewcommand{\arraystretch}{1}

\clearpage

%
%


\bibliographystyle{spmpsci}      
\bibliography{references.bib}   

%
%

\end{document}